\documentclass{article}
\usepackage{amsmath}
\usepackage{amsfonts}
\usepackage{amssymb}
\usepackage{amsthm}
\usepackage{graphicx}
\usepackage{tikz}
\usepackage{cleveref}

\makeindex

\def\BB{\mathbb B}

\def\RR{\mathbb R}

\def\ZZ{\mathbb Z}

\def\UU{\mathbb U}

\def\YY{\mathbb Y}

\def\bx{\mathbf{x}}
\def\bc{\mathbf{c}}

\def\by{\mathbf{y}}
\def\bz{\mathbf{z}}

\def\cH{\mathcal{H}}
\def\cF{\mathcal{F}}
\def\cG{\mathcal{G}}

\def\cR{\mathcal{R}}

\def\cC{\mathcal{C}}

\def\cN{\mathcal{N}}

\def\cT{\mathcal{T}}
\def\cA{\mathcal{A}}

\def\cF{\mathcal{F}}
\def\cP{\mathcal{P}}
\def\cQ{\mathcal{Q}}
\def\cR{\mathcal{R}}

\newcommand{\ga}{\alpha}
\newcommand{\gb}{\beta}
\newcommand{\gc}{\gamma}
\newcommand{\gd}{\delta}
\newcommand{\gr}{\rho}

\newcommand{\go}{\omega}
\newcommand{\gk}{\chi}

\newtheorem{theorem}{Theorem}

\newtheorem{corollary}{Corollary}

\newtheorem{definition}{Definition}

\newtheorem{lemma}[theorem]{Lemma}

\begin{document}
	
	\title{
		A Dynamic Programming Algorithm for Binary Polynomial Optimization
	}

	\author{
		Endre Boros\thanks{MSIS Department and RUTCOR, Rutgers University, New Brunswick, New Jersey, USA. Email: \texttt{endre.boros@rutgers.edu}.}
	}
	
	\date{\today}
	
	\maketitle
	
\begin{abstract}
	In a binary polynomial optimization problem (BPO, in short) we are maximizing a  multilinear polynomial expression depending on $n$ binary variables. This is a hard optimization class, containing many NP-hard problems, including unconstrained quadratic binary optimization. Several tractable special classes were considered in the literature, including problems with bounded tree-width (Crama, Hansen, Jaumard, 1990), 
	Berge-acyclic problems (Buchheim, Crama, and Heck, 2019), 
	$\gb$-acyclic problems (Del Pia and Di Gregorio, 2022, 2023), limited reach problems (Clausen, Crama, Lusby, Rodr\'{i}guez, and Ropke, 2024), $\ga$-acyclic problems with bounded rank (Del Pia and Khajavirad, 2025), and hypergraphs with bounded incidence tree-width (Capelli, Del Pia, and Di Gregorio, 2026). 
	
	We focus on a general variable elimination scheme for BPO, 
	and develop the unique explicit multilinear polynomial form for the equivalent BPO problem obtained after the elimination of a given subset of the variables. 
	The obtained closed form representation of such an equivalent BPO problem allows us to characterize special classes for which this elimination method, when applied recursively, provides a computationally efficient solution.  Our approach is elementary\footnote{We do not use LP models and algorithms; this is a purely algebraic dynamic programming approach.}, algebraic, and provides efficient solution to a wide problem class that properly generalizes several of the above mentioned tractable special cases.
\end{abstract}	

\section{Introduction}\label{s-intro}

Unconstrained binary polynomial optimization (or in short BPO) aims at finding the maximum value of a polynomial expression depending on $n$ binary variables. We use $\BB=\{0,1\}$ to denote the binary values, $V=[n]=\{1,2,\dots,n\}$ to denote the index set of the variables, and use $x_v\in \BB$ for $v\in V$ for the binary variables in our problem. We denote by $\bx=(x_v\mid v\in V)$ a vector of 
these binary variables, by $\BB^V$ the set of binary vectors, by $\RR$ the set of real numbers, by $\ZZ$ the set of integers, and by $\ZZ_+$ the set of nonnegative integers. 

To a given hypergraph $\cH\subseteq 2^V$ (that is a collection of subsets of $V$) and integer vector $\bc=(c_H\mid H\in \cH)\in \ZZ^{\cH}$ we associate a multilinear polynomial function 
\begin{equation}\label{e-multlinpoly}
f_{\cH,\bc}(\bx) ~=~ \sum_{H\in \cH} c_H \prod_{v\in H} x_v. 
\end{equation}
With this notation, we can formulate the \textsc{BPO problem} as 
\begin{equation}\label{e-BPO}
	\max_{\bx\in\BB^V} f_{\cH,\bc}(\bx).
\end{equation}
Note that for a binary value $x\in \BB$ we have $x^k=x$ for all integers $k\geq 1$, and thus any polynomial expression in these variables is equivalent to a multilinear one. Note also that $\cH$ is not assumed to be a Sperner hypergraph, and in particular, it may contain a hyperedge of size $0$ corresponding to a constant term, and hyperedges of size $1$ corresponding to the linear terms of $f_{\cH,\bc}$, etc. 
In discrete optimization it is common to assume rational coefficients. Multiplying our objective function with the common denominator of the coefficients of a given rational input function is not changing the input size in a non-polynomial way. So, for simplicity, we assume in the sequel that our input coefficients are integers\footnote{For the sole purpose of being able to make more precise complexity claims about our procedures.}. 
Let us introduce $Z(f_{\cH,\bc})=\sum_{H\in \cH} |c_H|$ and note that the (binary) input size of problem \eqref{e-BPO} is bounded by $O(|\cH||V|\log Z(f_{\cH,\bc}))$. 

A mapping $g:\BB^V\mapsto \RR$ is called a \emph{pseudo-Boolean function}. 
It is well-known that every pseudo-Boolean function has a unique multilinear polynomial representation $f_{\cH,\bc}$, where $\cH\subseteq 2^V$ and $\bc\in \RR^\cH$, see \cite{Hammer-Rosenberg-Rudeanu-1963,Hammer-Rudeanu-1968}.
Problem BPO is a special case of pseudo Boolean optimization\footnote{While a pseudo-Boolean function has a unique multilinear polynomial form, it also has several other representations, so called posiforms, some of which may arise naturally from the application we consider, and some of which may be exponentially smaller in size than the unique multilinear polynomial, see \cite{Boros-Hammer-2002} for more details.}, and it contains quadratic unconstrained binary optimization (QUBO) as a special case. Thus, BPO is an NP-hard optimization class, since QUBO is known to include several well-known NP-complete combinatorial optimization problems, see e.g. \cite{Boros-Hammer-2002,KHGLLWW-2014,punnen2022quadratic} for a more comprehensive list of related problems, equivalences, and applications.  

The structure of the hypergraph $\cH$ plays an important role in the hardness/easiness of BPO. The so called basic-algorithm introduced by \cite{Hammer-Rosenberg-Rudeanu-1963-b,Hammer-Rosenberg-Rudeanu-1963} for pseudo-Boolean optimization was shown to be polynomial for BPO problems when $\cH$ has bounded tree-width \cite{Crama-Hansen-Jaumard-1990}. The standard linearization technique was shown to lead to an integral polytope, and hence to polynomial time solvability of BPO when $\cH$ is Berge-acycic \cite{Buchheim-Crama-Heck-2019,DelPia-Khajavirad-2018a}. Unlike for graphs, acyclicity is not a uniquely definable notion for hypergraphs, see e.g., \cite{Brault-Baron-2017,Duris-2012,Fagin-1983b,Jegou-Ndiaye-2009,Ordyniak-Paulusma-Szeider-2013}. 
For the notion of the so-called $\beta$-acyclicity, it was shown in \cite{DelPia-Khajavirad-2021} that BPO is polynomially solvable if in addition $\cH$ does not contain a special substructure, called a kite. For the family of so called cycle-hypergraphs \cite{DelPia-DiGregorio-2021INFORMS} proved that BPO is polynomially solvable. More recently \cite{DelPia-DiGregorio-2022,DelPia-DiGregorio-2023} showed that BPO is solvable in strongly polynomial time whenever $\cH$ is $\beta$-acyclic (without any additional assumptions). 
The approach of  \cite{DelPia-DiGregorio-2022,DelPia-DiGregorio-2023} is based on the iterative elimination of the \emph{nest points}, the existence of which is guaranteed by $\beta$-acyclicity. This idea was generalized in \cite{Lanzinger-2023} by considering \emph{nest sets}. Recently \cite{Clausen-Crama-Lusby-Heck-Ropke-2024} introduced a new dynamic programming based approach that was shown to work efficiently for hypergraphs with \emph{limited reach} which is a subfamily of hypergraphs with bounded tree-width. Utilizing an enhanced linearization model polynomial solvability of BPO problems was shown in \cite{DelPia-Khajavirad-2025} for $\ga$-acyclic hypergraphs with bounded rank and in \cite{Capelli2026} for hypergraphs with bounded incidence tree-width.  (We are going to introduce all necessary notions and definitions later, in our technical sections.)

\subsection{Our Contributions}

The basic-algorithm, as introduced in \cite{Hammer-Rosenberg-Rudeanu-1963-b,Hammer-Rosenberg-Rudeanu-1963},
is a rather generic variable elimination scheme, described in mathematical terms. The important missing algorithmic details e.g., for bounded tree-width hypergraphs were added and analyzed by \cite{Crama-Hansen-Jaumard-1990}. In fact, several of the above cited positive results can be viewed as variants of this basic-algorithm, filing in the missing algorithmic details and complexity analysis for particular hypergraph structures.

In this paper we focus on a generalized version of the basic-algorithm for problem BPO, and provide a complete analysis and description of this approach. 

For a subset $S\subseteq V$ and binary vector $\bx\in\BB^V$, we define $\bx^S = (x_j\mid j\in S)$ the subvector of $\bx$ corresponding to index set $S$. For a partition $V=X\cup Y$ we also write $\bx=(\bx^X,\bx^Y)$.

Assume $\cH\subseteq 2^V$ and $\bc\in \ZZ^\cH$, as in \eqref{e-BPO}. For a nontrivial partition $V=X\cup Y$ (with $X\neq\emptyset$, $Y\neq\emptyset$), we define the \emph{projection} of $f_{\cH,\bc}$ on $Y$ as
\begin{equation}\label{e-eliminateX}
	f_{\cH,\bc}^{Y}(\bx^Y) ~=~ \max_{\bx^X\in \BB^X} f_{\cH,\bc}(\bx^X,\bx^Y) \text{ for all } \bx^Y\in\BB^Y.
\end{equation}
The results of \cite{Hammer-Rudeanu-1968} show that if $g:\BB^V\mapsto \ZZ$ is an integer mapping, then it's unique multilinear polynomial has integer coefficients. Thus, it follows that the unique multilinear polynomial form of $f_{\cH,\bc}^{Y}$ also has integer coefficients for every $Y\subseteq V$. 

In our first result we provide an explicit closed form for the unique multilinear polynomial of $f_{\cH,\bc}^{Y}$ to which we arrive by utilizing a higher dimensional representation of the problem, the so called signature space of a hypergraph (considered e.g., in \cite{Berczi-2021,Capelli-Florent-Strozecki-Yann-2021}.) We prove that in this unique multilinear polynomial the monomials that (may) have nonzero coefficients are in a one-to-one relation with the signatures of $\cT=\{H\setminus X\mid H\in\cH,~ H\cap X\neq\emptyset\}$. This family of monomials depends only on the hypergraph $\cH$ and the chosen subset $X$, but do not depend on the coefficients $\bc$ of our input. We also show that this set of monomials and the corresponding coefficients in the unique multlinear polynomial of $f_{\cH,\bc}^{Y}$ can be computed in polynomial time in $|V|$, $|\cH|$, $2^{|X|}$ and the size $K$ of the signature space of $\cT$ (which on its own turn is known to be computable polynomially in terms of $K$, $|V|$, and $|\cH|$, see \cite{Berczi-2021}.)

Our next result is the characterization of a special class of hypergraphs for which the unique multilinear polynomial of the projection $f_{\cH,\bc}^{Y}$ is efficiently computable in terms of the input size, and for which the variable elimination steps can be executed recursively and efficiently, leading to polynomial time solution of BPO. In particular, we show that the above cited tractable cases (e.g., for bounded tree-width, $\beta$-acyclic hypergraphs, hypergraphs with limited reach, bounded rank $\ga$-acyclic hypergraphs) belong to this class together with some additional classes (e.g., circular interval hypergraphs). Thus, our approach properly generalizes several of the above cited tractable classes. A notable exception among general (not limited to quadratic input) BPO problems is the family of hypergraphs with bounded incidence tree-width, see \cite{Capelli2026}. 

\subsection{Other lines of research}

Unconstrained binary optimization has a huge literature and numerous papers address BPO problems. In particular, the special case of quadratic unconstrained binary optimization (QUBO) received ample attention in the past 60 years. There are numerous papers about polynomially solvable special cases. We refer the reader to \cite{Boros-Hammer-2002,KHGLLWW-2014,punnen2022quadratic} for a more complete overview of all these results. 

One line of recent research for general BPO problems considers quadratization that is generating an equivalent quadratic reformulation of a higher degree multilinear polynomial in binary variables using additional auxiliary variables\footnote{Where "equivalence" is strict in the sense that the original BPO objective is the projection of the extended quadratic formulation, obtained by partially optimizing over the set of added auxiliary variables.}, see e.g., \cite{Anthony2017} for an overview of this approach.  While this line of research did not lead to new polynomially solvable special classes of BPO, they provided a connection to the numerous computational tools that are aimed to find a close-to-optimal solution for QUBO instances, and that many times work even for large sizes. This connection turned out to be very useful for applications in image processing and quantum computing, see e.g., \cite{Crama-Elloumi-Lambert-Heck-2025,dattani2019,Ishikawa2011,Prabhath2026}. 

Another line of even more recent literature for general BPO problems involves linearization methods and extended formulations of the resulting binary linear programs. The standard linearization technique yields an integral polytope, and hence leads to polynomial time solvability of BPO for Berge-acycic \cite{Buchheim-Crama-Heck-2019,DelPia-Khajavirad-2018a}. An extended linearization model \cite{DelPia-Khajavirad-2025} proved that BPO problems with bounded rank $\ga$-acyclic hypergraphs are also solvable in polynomial time. Recently \cite{Capelli2026} extended these results and showed that BPO is solvable in polynomial time for hypergraphs that have bounded incidence tree-width. 
\bigskip

\section{Signatures of hypergraphs}

Given a hypergraph $\cH\subseteq 2^V$ and $\bx\in\BB^V$, we introduce
\begin{equation}\label{e-y(x)}
y_H(\bx)=\prod_{v\in H} x_v ~~~\text{ and }~~~ \by(\bx)=(y_H(\bx)\mid H\in\cH)\in \BB^\cH.
\end{equation}
We call $\by(\bx)$ a \emph{signature} of $\cH$, and denote by 
\begin{equation}\label{e-YY}
\YY=\{\by(\bx)\mid \bx\in \BB^V\} \subseteq \BB^\cH
\end{equation}
the set of signatures of $\cH$. If the hypergraph is not obvious from the context, we write $\by^\cH(\bx)$ and $\YY_\cH$, respectively.

Signatures were considered in a more general setting in \cite{Berczi-2021} and the following result, applicable to our case, was shown there:

\begin{theorem}[\cite{Berczi-2021}]\label{t-signature-generation}
	Given a hypergraph $\cH\subseteq 2^V$ and a positive integer $K$, we can generate $\YY_\cH$ and show that $|\YY_\cH|<K$ or 
	generate $K$ distinct elements of $\YY_\cH$ in $K\cdot O(|V||\cH|)$ time. \qed
\end{theorem}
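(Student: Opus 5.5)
We would prove the statement with a flashlight (backtracking) enumeration over a binary tree whose nodes are \emph{partial signatures}. The tree is pruned so that every leaf we reach is a genuine, previously unseen element of $\YY_\cH$.

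\textbf{Setup.} Order the hyperedges as $H_1,\dots,H_m$, where $m=|\cH|$. For a prefix assignment $(y_1,\dots,y_k)\in\BB^k$, we must decide whether it extends to some $\by(\bx)$. Such an extension exists if and only if the following system is consistent:
\begin{itemize}
\item $x_v=1$ for all $v\in\bigcup_{i\le k,\, y_i=1}H_i$;
\item every $H_i$ with $i\le k$ and $y_i=0$ contains at least one $v$ with $x_v=0$.
\end{itemize}
Taking $x_v=0$ for every $v$ not forced to $1$ is the best choice for all the zero constraints at once. So consistency amounts to the check $H_i\not\subseteq U$ for each $i\le k$ with $y_i=0$, where $U=\bigcup_{i\le k,\,y_i=1}H_i$. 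This test takes $O(|V||\cH|)$ time, or $O(|V|)$ amortized if we maintain $U$ and a counter $|H_i\setminus U|$ for each $i$ incrementally. Consistency is hereditary along prefixes, and at depth $m$ the consistent leaves are exactly the elements of $\YY_\cH$.

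\textbf{Enumeration.} We run a depth-first search. At a consistent node we try both values of the next coordinate and descend only into children that pass the consistency test. Every visited node therefore has at least one descendant leaf, so each internal node lies on the root-to-leaf path of some output signature. Distinct leaves are distinct signatures, because they are distinct vectors in $\BB^\cH$. We stop once $K$ leaves have been output. If the search finishes earlier, we have produced all of $\YY_\cH$, which certifies $|\YY_\cH|<K$ (or $\le K$; the statement's $<$ is a matter of the stopping convention).

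\textbf{Cost.} Between two consecutive outputs, the search traverses at most $O(m)$ nodes, since backtracking and re-descending each cost at most depth $m$. Each node needs $O(1)$ consistency tests. With incremental maintenance a test costs $O(|V|)$; adding or removing a hyperedge from $U$ costs $O(|V|)$ for updating $U$, plus an update of the counters that amortizes to $O(|V||\cH|)$ per leaf over the path. This gives $O(|V||\cH|)$ per output, hence $K\cdot O(|V||\cH|)$ in total. The delicate step is this accounting. A naive recheck at every node would cost $O(|V|m)$ per node and therefore $O(|V|m^2)$ per leaf. To avoid that, we observe that along one root-to-leaf path each hyperedge is added to $U$ at most once, so the total counter-update work along a path is $\sum_i |H_i|\le |V||\cH|$. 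Backtracking undoes these updates at the same cost, which gives the required bound per produced signature.
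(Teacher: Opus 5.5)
Your enumeration itself is sound: the consistency criterion ($H_i\not\subseteq U$ for every zero-labelled $i\le k$) is correct, every node that passes it has a leaf below it, and distinct leaves are distinct signatures. The gap is in the cost accounting. You claim that each node needs $O(1)$ tests costing $O(|V|)$, with the counter updates amortized along the output path; this does not hold.

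Testing the child $y_{k+1}=1$ means deciding whether some zero-labelled $H_i$, $i\le k$, satisfies $H_i\subseteq U\cup H_{k+1}$. With your counters, you must insert every vertex of $H_{k+1}\setminus U$ and decrement the counters of all hyperedges through it. That is $\sum_{v\in H_{k+1}\setminus U}\deg(v)$ work, which can be as large as $\sum_{H\in\cH}|H|$. Your amortization is valid because each \emph{vertex}, not each hyperedge, enters $U$ at most once along a path. But it covers only additions that stay on the root-to-leaf path of an output. A tentative addition whose test fails is undone at once and lies on no output path, and there can be $\Theta(|\cH|)$ such additions between two consecutive outputs.

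Concretely, let $|A|=m$, $V=A\cup\{b_2,\dots,b_m\}$, $H_1=A$ and $H_j=A\cup\{b_j\}$ for $2\le j\le m$, and try $0$ first. The first output is the all-zero signature. On the way back up, every child $y_j=1$ ($2\le j\le m$, with $U=\emptyset$) fails because $H_1\subseteq H_j$. This is detected only once all of $A$ has been inserted, and every vertex of $A$ lies in all $m$ hyperedges. So each failure costs $\Theta(m^2)$, and $\Theta(m^3)=\Theta(|V||\cH|^2)$ work is done before the second signature $(1,0,\dots,0)$ appears, whereas the statement with $K=2$ allows $O(m^2)$.

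Trying $1$ first does not help with your counters either. Take $|A|=m$, $C=\{c_3,\dots,c_m\}$, $V=A\cup C$, $H_1=V$, $H_2=C$ and $H_j=A\cup(C\setminus\{c_j\})$ for $3\le j\le m$. The same thing happens in the branch $y_1=0$, $y_2=1$, right after the all-ones output. So, as written, your search only guarantees $O(|V||\cH|^2)$ time per output.

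The stated bound is easier to reach by enumerating the union closure instead of searching over prefixes. By \Cref{l-union-signature}, the map sending $U\in\cH^u$ to the vector with $y_H=1$ iff $H\subseteq U$ is a bijection onto $\YY_\cH$. Moreover, every $U\in\cH^u$ is reached from $\emptyset$ by adding hyperedges one at a time. So start from $\emptyset$ and, for each set $U$ found so far and each $H\in\cH$, form $U\cup H$ in $O(|V|)$ time and insert it into a trie over $\BB^V$ (another $O(|V|)$) if it is new. Stop as soon as $K$ sets have been found, or when every found set has been processed; in the latter case all of $\cH^u$ is known and $|\YY_\cH|<K$. At most $K$ sets are processed, each at cost $O(|V||\cH|)$. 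Translating a set into its signature costs $O(\sum_{H\in\cH}|H|)$, so the total is $K\cdot O(|V||\cH|)$. (The paper itself gives no proof of this statement; it quotes it from B\'erczi et al.)
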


\subsection{Lifting multilinear polynomials}\label{subsec-lifting}

Let us now fix a hypergraph $\cH\subseteq 2^V$, an integer vector $\bc\in\ZZ^\cH$, and a nontrivial partition $V=X\cup Y$. We associate to $\cH$ and $V=X\cup Y$ the following hypergraphs:
\begin{equation}\label{e-PQRT}
\begin{array}{rl}
	\cP &=\{H\in\cH\mid H\subseteq X\},\\
	\cQ &=\{H\in\cH\mid H\cap X\neq\emptyset \text{ and } H\cap Y\neq\emptyset\},\\
	\cR &=\{H\in\cH\mid H\subseteq Y\}, \text{ and }\\
	\cT &=\{H\cap Y\mid H\in \cQ\}.
\end{array}
\end{equation}
We also consider the signatures $\YY_\cT$ of hypergraph $\cT$ and partition $f_{\cH,\bc}(\bx)=\ga(\bx)+\gb(\bx)+\gc(\bx)$ where 
\[
\begin{array}{rl}
	\ga(\bx) &= \displaystyle\sum_{H\in \cP} c_H \prod_{v\in H} x_v,\\*[5mm]
	\gb(\bx) &= \displaystyle\sum_{H\in \cQ} c_H \prod_{v\in H} x_v, \text{ and }\\*[5mm]
	\gc(\bx) &= \displaystyle\sum_{H\in \cR} c_H \prod_{v\in H} x_v.\\
\end{array}
\]
Note that $\ga$ depends only on variables $x_v$, $v\in X$, while $\gc$ depends only on variables $x_v$, $v\in Y$. We can transform $\gb$ as follows:
\[
\begin{array}{rl}
	\gb(\bx) &= \displaystyle\sum_{H\in \cQ} c_H \prod_{v\in H} x_v,\\*[8mm]
			 &= \displaystyle\sum_{H\in \cQ} c_H \left(\prod_{v\in H\cap X} x_v\right)\left(\prod_{v\in H\cap Y} x_v\right)\\*[8mm]
			 &= \displaystyle\sum_{H\in \cQ} c_H \left(\prod_{v\in H\cap X} x_v\right) y_{H\cap Y}(\bx).
\end{array}
\]
Note that on the right hand side we have sets $H\cap Y$ for $H\in \cQ$ that is sets from $\cT$. Thus, for a particular vector $\by=(y_T\mid T\in \cT)\in \YY_\cT$ we can introduce
\[
\gb_\by(\bx) ~=~ \displaystyle\sum_{H\in \cQ} c_H \left(\prod_{v\in H\cap X} x_v\right) y_{H\cap Y}.
\]
Now, both $\ga$ and $\gb_\by$ depends only on variables from $X$, for each $\by\in \YY_\cT$. Let us finally define
\begin{equation}\label{e-mu}
\mu(\by) ~=~ \max_{\bx\in \BB^X} \left(\ga(\bx)+\gb_\by(\bx)\right) ~~~\text{ for all } \by\in \YY_\cT.
\end{equation}
With these notations we can rewrite the reduced function $f^Y_{\cH,\bc}$ defined in \eqref{e-eliminateX}.

\begin{lemma}\label{l-first-rewrite}
For $\bx\in \BB^Y$ we have
\[
f^Y_{\cH,\bc}(\bx) 
~=~\displaystyle \gc(\bx) +\sum_{\bz\in \YY_\cT} \mu(\bz) \left(\prod_{T\in\cT\atop z_T=1}y_T(\bx)\right)\prod_{T\in\cT\atop z_T=0}(1-y_T(\bx)).
\]
\end{lemma}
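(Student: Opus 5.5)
Fix $\bx^Y\in\BB^Y$ and evaluate both sides at this point; the identity is pointwise, so this suffices. Let $\bz^*=\by^\cT(\bx^Y)=(y_T(\bx^Y)\mid T\in\cT)$. By definition \eqref{e-YY} (applied to $\cT\subseteq 2^Y$), $\bz^*\in\YY_\cT$.

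\textbf{Step 1: the left-hand side equals $\gc(\bx^Y)+\mu(\bz^*)$.} Start from \eqref{e-eliminateX} and the decomposition $f_{\cH,\bc}=\ga+\gb+\gc$. Since $\gc$ depends only on $\bx^Y$, it can be taken out of the maximum over $\bx^X$. By the transformation of $\gb$ displayed above, for every $\bx^X$ we have $\gb(\bx^X,\bx^Y)=\gb_{\bz^*}(\bx^X)$, because each factor $\prod_{v\in H\cap Y}x_v$ equals $y_{H\cap Y}(\bx^Y)=z^*_{H\cap Y}$. Also $\ga$ depends only on $\bx^X$. Hence
\[
f^Y_{\cH,\bc}(\bx^Y)=\gc(\bx^Y)+\max_{\bx^X\in\BB^X}\bigl(\ga(\bx^X)+\gb_{\bz^*}(\bx^X)\bigr)=\gc(\bx^Y)+\mu(\bz^*),
\]
which is exactly \eqref{e-mu}.

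\textbf{Step 2: the sum over $\YY_\cT$ picks out $\bz^*$.} For each $\bz\in\YY_\cT$, the product
\[
\prod_{z_T=1}y_T(\bx^Y)\prod_{z_T=0}\bigl(1-y_T(\bx^Y)\bigr)
\]
is a product of binary numbers. It equals $1$ exactly when $y_T(\bx^Y)=z_T$ for all $T\in\cT$, that is, when $\bz=\bz^*$, and it equals $0$ otherwise. It is the indicator of $\bz$ on $\BB^\cT$. So the sum reduces to the single term $\mu(\bz^*)$. This term is actually present because $\bz^*\in\YY_\cT$, and this membership is the one point that needs to be stated explicitly. Combining with Step 1 gives the lemma.

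There is no real obstacle here. The only care needed is bookkeeping in Step 1. First, $\cT$ is a set, so distinct $H\in\cQ$ may share the same $H\cap Y$; this is harmless because $\gb_\by$ indexes $y$ by $H\cap Y$. Second, we must check that $\ga$, $\gb_\by$ and $\gc$ genuinely depend only on the claimed variables, which holds since $\cP$, $\cQ$, $\cR$ partition $\cH$ according to intersections with $X$ and $Y$. Edge cases are also covered: the empty hyperedge lies in $\cR$ (it is a subset of $Y$), and if $\cT$ is empty then $\YY_\cT$ consists of the single empty vector, the product is $1$, and the formula still holds.
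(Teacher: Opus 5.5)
Your proof is correct and is essentially the paper's argument. The product is the indicator of $\by(\bx)=\bz$, so the sum collapses to $\mu(\by(\bx))$, and \eqref{e-eliminateX} together with \eqref{e-mu} identifies $f^Y_{\cH,\bc}(\bx)$ with $\gc(\bx)+\mu(\by(\bx))$; your version simply spells out the bookkeeping the paper leaves implicit.

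One small caveat (a nitpick inherited from the paper): under the literal definitions \eqref{e-PQRT}, the empty hyperedge is a subset of both $X$ and $Y$, so it lies in both $\cP$ and $\cR$, and these families do not literally partition $\cH$. Placing $\emptyset$ only in $\cR$, as you do, is the right fix (otherwise $c_\emptyset$ would be counted twice), but it is a convention rather than a consequence of the definitions.
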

\begin{proof}
	Note that the term with coefficient $\mu(\bz)$ is either zero or equals to $\mu(\bz)$, and it is $\mu(\bz)$ exactly for those $\bx\in \BB^Y$ vectors for which we have $\by(\bx)=\bz$.
	Thus the claim follows by \eqref{e-eliminateX}, \eqref{e-y(x)}, and the definition \eqref{e-mu} of $\mu$. 
\end{proof}

Let us remark here that the above expression may not seem to provide a ``small'' multilinear form for the projection $f^Y_{\cH,\bc}$, since to obtain that we would need to carry out the multiplications on the right, for every $\bz\in \YY_\cT$, and for a $\bz$ vector with many zeros that multiplication may yield potentially exponentially many different monomials in terms of the $x_v$, $v\in V$ binary variables. In the next claim we show that in fact this is not happening, and the size is limited in terms of the size of $\YY_\cT$. 

\begin{theorem}\label{l-effective transformation}
	Assume that $\bz^*\in \YY_\cT$. Then there exist unique integer numbers $\gd(\bz^*,\bz)$ for all $\bz\in \YY_\cT$ such that the equality
	\begin{equation}\label{e-unique-coefficients}
	\left(\prod_{T\in\cT\atop z_T=1}y_T(\bx)\right)\prod_{T\in\cT\atop z_T=0}(1-y_T(\bx)) 
	~=~
	\sum_{\bz\in \YY_\cT\atop \bz\geq \bz^*} \gd(\bz^*,\bz) \left(\prod_{v\in \displaystyle\bigcup_{T\in\cT\atop z_T=1}T}x_v\right)
	\end{equation}
	holds for all $\bx\in\BB^Y$. Furthermore, for all $\bz\in \YY_\cT$ we have $|\gd(\bz^*,\bz)|\leq |\YY_\cT|^{|V|}$. 
\end{theorem}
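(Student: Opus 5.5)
The plan is to read the left-hand side of \eqref{e-unique-coefficients} (with $\bz^*$ in place of $\bz$) as the indicator of the event $\by(\bx)=\bz^*$, and to invert it on the poset $(\YY_\cT,\geq)$ by M\"obius inversion. For $\bz\in\YY_\cT$ write $U(\bz)=\bigcup_{T\in\cT,\, z_T=1}T\subseteq Y$, so the monomials on the right are $x^{U(\bz)}:=\prod_{v\in U(\bz)}x_v$.

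\textbf{Step 1 (upper indicators are monomials).} For every $\bz\in\YY_\cT$ and $\bx\in\BB^Y$ I will show
\[
x^{U(\bz)}=[\,\by(\bx)\geq \bz\,].
\]
If $x^{U(\bz)}=1$, then every $T$ with $z_T=1$ lies in the support of $\bx$, so $y_T(\bx)=1$. Conversely, if $\by(\bx)\geq\bz$, then all $T$ with $z_T=1$ lie in the support, hence so does $U(\bz)$.

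\textbf{Step 2 (the map $\bz\mapsto U(\bz)$ is injective).} Write $\bz=\by(\bx')$. Then $U(\bz)\subseteq\mathrm{supp}(\bx')$, and a set $T$ with $z_T=0$ is not contained in $\mathrm{supp}(\bx')$, hence not in $U(\bz)$. Thus $\bz=\by(\mathbf 1_{U(\bz)})$ is recovered from $U(\bz)$, and the monomials $x^{U(\bz)}$, $\bz\in\YY_\cT$, are pairwise distinct.

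\textbf{Step 3 (M\"obius inversion and uniqueness).} Fix $\bx$. By Step 1, $g(\bz):=x^{U(\bz)}=\sum_{\bw\in\YY_\cT,\,\bw\geq\bz}[\by(\bx)=\bw]$, because $\by(\bx)$ is the unique element of $\YY_\cT$ equal to itself. M\"obius inversion on the finite poset $(\YY_\cT,\geq)$ then gives
\[
[\by(\bx)=\bz^*]=\sum_{\bz\geq\bz^*}\mu_P(\bz^*,\bz)\,x^{U(\bz)},
\]
so I set $\gd(\bz^*,\bz)=\mu_P(\bz^*,\bz)$. These are integers, since the M\"obius function takes integer values. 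Uniqueness follows from the uniqueness of the multilinear representation of a pseudo-Boolean function, together with Step 2: distinct $\bz$ give distinct monomials, so the coefficients are determined.

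\textbf{Step 4 (the bound).} I will use Hall's theorem: $\mu_P(\bz^*,\bz)$ is the alternating sum $\sum_k(-1)^k c_k$, where $c_k$ is the number of strict chains $\bz^*=\bw_0<\dots<\bw_k=\bz$. Along a strict chain the sets $U(\bw_i)$ strictly increase by Step 2, so $k\leq|Y|\leq|V|$. Each intermediate element can be chosen in fewer than $|\YY_\cT|$ ways, so $\sum_k c_k\leq\sum_{k\le|V|}|\YY_\cT|^{k-1}$, which is at most $|\YY_\cT|^{|V|}$ when $|\YY_\cT|\geq2$. When $|\YY_\cT|=1$ the statement is trivial, since then $\mu=1$.

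I expect the main obstacle to be Step 1 together with Step 2, namely identifying the expanded products with the upper-set indicators of the poset and checking that $\bz\mapsto U(\bz)$ is injective. Once those hold, Steps 3 and 4 are standard poset combinatorics.
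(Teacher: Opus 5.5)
Your proof is correct and is essentially the paper's argument. The paper's induction on $vol(\by(\bx),\bz^*)$ produces exactly the recursion $\gd(\bz^*,\bz^*)=1$ and $\sum_{\bz^*\le\bz\le\mathbf{w}}\gd(\bz^*,\bz)=0$ for $\mathbf{w}>\bz^*$. That is the defining recursion of the M\"obius function of $(\YY_\cT,\le)$, so your $\gd=\mu_P$ is the same set of coefficients. Naming it as M\"obius inversion, stating the injectivity of $\bz\mapsto U(\bz)$ explicitly, and invoking Hall's chain-counting formula give a more rigorous justification of $|\gd(\bz^*,\bz)|\le|\YY_\cT|^{|V|}$ than the paper's remark that the recursion has depth at most $|V|$.
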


\begin{proof}
	Note first that for all $\bz\in\YY_\cT$ by the definition \eqref{e-y(x)} we have 
	\[
	\left(\prod_{v\in \displaystyle\bigcup_{T\in\cT\atop z_T=1}T}x_v\right)
	~=~ \prod_{T\in \cT\atop z_T=1} y_T(\bx).
	\]
	Thus it is enough to show that for all $\bx\in\BB^Y$ we have the equality
	\begin{equation}\label{e-unique--coefficients-enough}
	\left(\prod_{T\in\cT\atop z^*_T=1}y_T(\bx)\right) \prod_{T\in\cT\atop z^*_T=0} \left(1-y_T(\bx)\right) 
	~=~
	\sum_{\bz\in \YY_\cT\atop \bz\geq \bz^*} \gd(\bz^*,\bz) 
	\prod_{T\in \cT\atop z_T=1} y_T(\bx)
	\end{equation}
	for some uniquely defined integers $\gd(\bz^*,\bz)$ for $\bz\in \YY_\cT$. 
	
	For $\bx\in \BB^Y$ let us define $vol(\by(\bx),\bz^*)$ as the number of vectors $\by\in \YY_\cT$ for which $\by(\bx)\geq \by \geq \bz^*$. 
	
	We are going to prove \eqref{e-unique--coefficients-enough} by induction on $vol(\by(\bx),\bz^*)$. 
		
	Let us observe that if for a binary vector $\bx\in \BB^Y$ we have $vol(\by(\bx),\bz^*)=0$, i.e., 
	$\by(\bx)\not\geq \bz^*$, then we have a set $T\in\cT$ such that $z^*_T=1$ and $y_T(\bx)=0$. Consequently, we get zero on both sides of the above equality (because $y_T(\bx)$ participates in all products.)
	
	Assume next that $\bx\in \BB^Y$ satisfies $vol(\by(\bx),\bz^*)=1$, i.e., 
	$\by(\bx)=\bz^*$. Then we get $1=\gd(\bz^*,\bz^*)$, defining $\gd(\bz^*,\bz^*)$. This is because for all $\bz\in \YY_\cT$, $\bz\geq \bz^*$, $\bz\neq \bz^*$ we have a $T\in \cT$ with $z_T=1$ and $z^*_T=y_T(\bx)=0$ that cancels out the corresponding product on the right hand side.
	
	Let us finally consider the binary vectors $\bx\in \BB^Y$ for which we have 
	$vol(\by(\bx),\bz^*)>1$, i.e., $\by(\bx)\geq \bz^*$, $\by(\bx)\neq \bz^*$. 
		
	We claim first that for all $\bx\in \BB^Y$ for which we have $\by(\bx)\geq \bz^*$, $vol(\by(\bx),\bz^*)>1$
	equation \eqref{e-unique--coefficients-enough} reduces to 
	\begin{equation}\label{e-y>=z^*}
		0 ~=~ \sum_{\bz\in \YY_\cT\atop \by(\bx)\geq \bz\geq \bz^*} \gd(\bz^*,\bz) 
		\prod_{T\in \cT\atop z_T=1} y_T(\bx).
	\end{equation}
	This is because for $\bz\in\YY_\cT$, $\bz\geq\bz^*$, $\bz\not\leq \by(\bx)$ there exists a $T\in \cT$ such that $z_T=1$ and $y_T(\bx)=0$.
	
	Assume that we proved \eqref{e-unique--coefficients-enough} for all $\by(\bx)$ with $vol(\by(\bx),\bz^*)\leq k$, and consider next $\by(\bx)$ with $vol(\by(\bx),\bz^*)=k+1$. If no such $\bx$ exists, then we can consider \eqref{e-unique--coefficients-enough} proven for $k+1$, too. Otherwise from \eqref{e-y>=z^*} we get
	\[
	0~=~\sum_{\bz\in \YY_\cT\atop \by(\bx)\geq \bz \geq \bz^*} \gd(\bz^*,\bz)
	\]
	from which we get
	\[
	\gd(\bz^*,\by(\bx)) ~=~ -\sum_{{\bz\in \YY_\cT\atop \by(\bx)\geq \bz \geq \bz^*}\atop \bz\neq \by(\bx)} \gd(\bz^*,\bz)
	\]
	Since on the right hand side for all participating $\bz$ vectors we have $vol(\bz,\bz^*)\leq k$, the corresponding coefficients $\gd(\bz^*,\bz)$ are already proven to have a unique integer values. Thus the above equality defines uniquely $\gd(\bz^*,\by(\bx))$, as well.
	This completes the induction, and hence completes the proof of our statement.
	
	Note that each new value in this recursion is computed by a simple sum. Furthermore, the depth of the induction is at most $|V|$, thus the claim on the size of the coefficients follows.
\end{proof}

Note that our proof above shows that the right hand side of \eqref{e-unique-coefficients} is the unique multilinear polynomial of the $\BB^{\YY_\cT}\mapsto \ZZ$  mapping defined by the left hand side. (Recall, that all such mappings have a unique multilinear polynomial representation by \cite{Hammer-Rosenberg-Rudeanu-1963,Hammer-Rudeanu-1968}.)

Furthermore, \Cref{l-effective transformation} implies that considering the signature space of $\cT$ is the right choice, in the sense that the ``new'' monomials that may have nonzero coefficient in $f^Y_{\cH,\bc}$ for some $\bc\in \ZZ^\cH$ are exactly the ones on the right hand side of \eqref{e-unique-coefficients} corresponding to the signatures in $\YY_\cT$. 

\bigskip

\begin{theorem}\label{t-main-X-reduction}
	For a hypergraph $\cH\subseteq 2^V$ and nontrivial partition $V=X\cup Y$ we have 
	\begin{equation}\label{e-main-theorem}
	f^Y_{\cH,\bc}(\bx) ~=~ \gc(\bx) 
	+
	\sum_{\by\in\YY_\cT} 
	\left( \sum_{\by^*\in \YY_\cT\atop \by\geq \by^*} \mu(\by^*)\gd(\by^*,\by) \right) \left(\prod_{v\in \displaystyle\bigcup_{T\in\cT\atop y_T=1}T}x_v\right)
	\end{equation}
	and we can compute this in $O(|\cH||\YY_\cT|\max\{|V|,2^{|X|}|X|,|\YY_\cT|\})$ time.
	
	Furthermore $Z(f^Y_{\cH,\bc})$ is limited by $O(Z(\cH,\bc)|\YY_\cT||V|^{|V|+1})$.
\end{theorem}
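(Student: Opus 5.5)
The identity \eqref{e-main-theorem} follows by combining \Cref{l-first-rewrite} with \Cref{l-effective transformation}. The plan is as follows.

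\textbf{Step 1: the identity.} Start from the expression of \Cref{l-first-rewrite}. For each $\bz^*\in\YY_\cT$, replace the product $\prod_{z^*_T=1}y_T(\bx)\prod_{z^*_T=0}(1-y_T(\bx))$ by the right-hand side of \eqref{e-unique-coefficients}. This gives a double sum over pairs $(\bz^*,\bz)$ with $\bz\geq\bz^*$, both in $\YY_\cT$. Exchanging the order of summation, so that the outer sum runs over $\by=\bz$ and the inner one over $\by^*\leq\by$, produces exactly the coefficient $\sum_{\by^*\leq \by}\mu(\by^*)\gd(\by^*,\by)$ in front of the monomial $\prod_{v\in\bigcup_{y_T=1}T}x_v$.

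Distinct signatures may give the same union and hence the same monomial. This does not affect the identity; it only means that terms are merged when the unique multilinear form is read off.

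\textbf{Step 2: the running time.} The computation splits into four parts.
\begin{itemize}
\item Generate $\YY_\cT$ by \Cref{t-signature-generation}, which costs $O(|\YY_\cT||V||\cH|)$ since $|\cT|\leq|\cH|$.
\item For each $\by\in\YY_\cT$, compute $\mu(\by)$ by brute force over $\BB^X$. Evaluating $\ga+\gb_\by$ at a point costs $O(|\cH||X|)$, so this part costs $O(|\YY_\cT|2^{|X|}|X||\cH|)$.
\item Compute the numbers $\gd(\by^*,\by)$ by the recursion in the proof of \Cref{l-effective transformation}. Process the signatures $\by\geq\by^*$ in order of increasing $vol$, setting $\gd(\by^*,\by^*)=1$ and $\gd(\by^*,\by)=-\sum_{\by^*\leq\bz<\by}\gd(\by^*,\bz)$. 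Each comparison $\bz\leq\by$ costs $O(|\cT|)$. A naive count of this step is $|\YY_\cT|^3|\cH|$, which exceeds the stated bound.
\item Form the coefficients, which takes $|\YY_\cT|^2$ multiplications and additions, and compute the unions $\bigcup_{y_T=1}T$, which takes $O(|\YY_\cT||\cH||V|)$.
\end{itemize}

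\textbf{The main obstacle} is fitting the computation of the $\gd$ values into $O(|\cH||\YY_\cT|^2)$. I would not compute each $\gd(\by^*,\cdot)$ separately. Instead, note that the coefficient vector $\lambda(\by)=\sum_{\by^*\leq\by}\mu(\by^*)\gd(\by^*,\by)$ is the Möbius inversion of $\mu$ on the poset $(\YY_\cT,\leq)$. That is, $\lambda$ is determined by the condition $\sum_{\bz\leq\by}\lambda(\bz)=\mu(\by)$ for all $\by\in\YY_\cT$, which is what evaluating the identity at any $\bx$ with $\by(\bx)=\by$ gives. So $\lambda$ can be computed directly by one pass in order of increasing number of ones, as $\lambda(\by)=\mu(\by)-\sum_{\bz<\by}\lambda(\bz)$. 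The cost of this pass is dominated by building the comparability relation, which takes $O(|\YY_\cT|^2|\cT|)$ time. Adding the three parts yields $O(|\cH||\YY_\cT|\max\{|V|,2^{|X|}|X|,|\YY_\cT|\})$.

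\textbf{Step 3: the coefficient size.} First, $|\mu(\by)|\leq Z(f_{\cH,\bc})$. Next, \Cref{l-effective transformation} gives $|\gd|\leq|\YY_\cT|^{|V|}$. The inner sum has at most $|\YY_\cT|$ terms. Together these bound each coefficient, and summing over the outer sum and adding $\gc$ bounds $Z(f^Y_{\cH,\bc})$.

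To reach the stated form $|V|^{|V|+1}$, I would sharpen the bound on $\gd$. Chains in $\YY_\cT$ have length at most $|V|+1$, because the unions $\bigcup_{y_T=1}T$ strictly grow along a chain; otherwise two comparable distinct signatures would give the same point set, which I would argue cannot happen. Expanding the recursion along chains then bounds $|\gd|$ by the number of chains from $\by^*$ to $\by$. I would control that count using the chain-length bound to obtain a factor of order $|V|^{|V|}$, accepting the bound of \Cref{l-effective transformation} as given if this refinement fails.
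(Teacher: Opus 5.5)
\textbf{Steps 1 and 2.} Your Step 1 is the paper's argument: substitute \Cref{l-effective transformation} into \Cref{l-first-rewrite} and swap the sums. Your remark that distinct signatures may give the same union is false, though harmless there. By \Cref{l-union-signature}, $\by$ is recovered from $U(\by)=\bigcup_{T\in\cT,\,y_T=1}T$ via $y_T=1\iff T\subseteq U(\by)$, and you need this fact later anyway.

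Your Step 2 departs from the paper, to your advantage. The paper computes every $\gd(\by^*,\by)$ by the recursion and asserts this costs $O(|\YY_\cT|^2|\cT|)$. That is not evident, since there are up to $|\YY_\cT|^2$ such values, each a sum of up to $|\YY_\cT|$ earlier ones. Your triangular solve $\lambda(\by)=\mu(\by)-\sum_{\bz<\by}\lambda(\bz)$ never forms the $\gd$'s and does fit the stated time bound.

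\textbf{The gap is in Step 3.} What you actually prove is $|\lambda(\by)|\le|\YY_\cT|^{|V|+1}Z(f_{\cH,\bc})$, hence $Z(f^Y_{\cH,\bc})\le(1+|\YY_\cT|^{|V|+2})Z(f_{\cH,\bc})$. This is not $O(Z(f_{\cH,\bc})|\YY_\cT||V|^{|V|+1})$ unless $|\YY_\cT|=O(|V|)$. The refinement you sketch does not close this, for two reasons.
\begin{itemize}
\item The chain-length bound alone does not give at most $|V|^{|V|}$ chains. You would additionally need that a chain from $U(\by^*)$ to $U(\by)$ is determined by the ordered partition of $U(\by)\setminus U(\by^*)$ into its successive differences.
\item Even granting $|\gd|\le|V|^{|V|}$, the $|\YY_\cT|$ inner terms times $|\YY_\cT|$ outer terms leave $|\YY_\cT|^2|V|^{|V|}$, not $|\YY_\cT||V|^{|V|+1}$.
\end{itemize}

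The missing idea is to bound $\lambda(\by)$ directly rather than through the $\gd$'s. The monomials $\prod_{v\in U(\by)}x_v$ are pairwise distinct, so $\sum_{\by}\lambda(\by)\prod_{v\in U(\by)}x_v$ is the unique multilinear form of $f^Y_{\cH,\bc}-\gc$. M\"obius inversion on the Boolean lattice $2^Y$, together with \Cref{l-first-rewrite}, then gives
\[
\lambda(\by)~=~\sum_{S\subseteq U(\by)}(-1)^{|U(\by)\setminus S|}\,\mu\big(\by(\bx_S)\big),
\]
where $\bx_S\in\BB^Y$ is the characteristic vector of $S$. Since $|\mu|\le Z(f_{\cH,\bc})$, this yields $|\lambda(\by)|\le 2^{|V|}Z(f_{\cH,\bc})$. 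Hence $Z(f^Y_{\cH,\bc})\le(1+|\YY_\cT|2^{|V|})Z(f_{\cH,\bc})$, which lies within the claimed bound because $|V|\ge 2$.

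The paper justifies this bound only by a one-line appeal to the depth-$|V|$ additive recursion. So this is a step you must supply yourself rather than cite.
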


\begin{proof}
	We can arrive to this form by substituting the right hand side from \Cref{l-effective transformation} into the form in \Cref{l-first-rewrite}.
	
	We can carry out the computations, in the following way: 
	\begin{itemize}
		\item First we compute the set $\YY_\cT$ in $O(|\YY_\cT||V||\cH|)$ time by \Cref{t-signature-generation}. 	
		\item Next we can determine the $\mu(\by^*)$ values by \eqref{e-mu}. Note that $\ga(\bx)+\gb_\by(\bx)$ has at most $|\cP|+|\cQ|\leq |\cH|$ many terms, each of degree at most $|X|$. Thus computing $\mu(\by)$ for a particular $\by$ vector may take $O(2^{|X|}|X||\cH|)$ time. Doing this for all $\by^*\in\YY_\cT$ thus takes $O(2^{|X|}|X||\cH||\YY_\cT|)$ time. 
		\item Then we pre-compute the $\by\geq \by^*$ pairs and the $vol(\by,\by^*)$ quantities for all $\by^*,\by\in\YY_\cT$ in $O(|\YY_\cT|^2|\cT|)$ time. 
		\item Determining the $\gd(\by^*,\by)$ values, following the inductive procedure in the proof of \Cref{l-effective transformation} may take also $O(|\YY_\cT|^2|\cT|)$ time.
		\item Finally, computing the coefficients for each of the $|\YY_\cT|$ many terms in \eqref{e-main-theorem} can be done in $O(|\YY_\cT|^2)$ time.
	\end{itemize}
	Since $|\cT|\leq |\cH|$, we get in total
	\begin{multline*}
	O(\max\{ |\YY_\cT||V||\cH|, 2^{|X|}|X||\cH||\YY_\cT|,|\YY_\cT|^2|\cH|\}) \\ ~=~ O(|\cH||\YY_\cT|\max\{|V|,2^{|X|}|X|,|\YY_\cT|\}).
	\end{multline*}
	The claim for the size of the coefficients follows by Theorem \ref{l-effective transformation}, since $|\YY_\cT|\leq 2^{|V|}$ many $\gd(\by^*,\by)$ values are computed by an additive recursion of depth $|V|$ starting with $1$, $\mu(\by^*)$ is limited by $Z(\cH,\bc)$, and we have $|\YY_\cT|$ many ``new'' coefficients. 
\end{proof}

\subsection{Algorithmic Consequences}

Using the above results, we can easily build an algorithm that solves problem \eqref{e-BPO} by iteratively eliminating small (with an eye on the efficiently computable special cases) sets of variables. 

To be able to do this, we need to refine our notation (to avoid potential ambiguities). For a mapping $g:\BB^V\mapsto \ZZ$ let us denote by $\cH(g)\subseteq 2^V$ the hypergraph for which we have $g=f_{\cH(g),\bc}$ for some $\bc \in (\ZZ\setminus\{0\})^{\cH(g)}$ (that is with no zero components). Note that by \cite{Hammer-Rosenberg-Rudeanu-1963,Hammer-Rudeanu-1968} $g$ has a unique multilinear polynomial representation, and thus both $\cH(g)$ and $\bc$ are uniquely defined. 

Let us recall next that to a hypergraph $\cH$ and a partition $V=X\cup (V\setminus X)$ we associated partial hypergraphs as in \eqref{e-PQRT}. In a multistep procedure this may lead to ambiguities, so for clarity we will write  

\begin{equation}\label{e-PQRT-updated}
	\begin{array}{rl}
		\cP = \cP(\cH,X) &=~ \{H\in\cH \mid H\subseteq X\}  \\
		\cQ = \cQ(\cH,X) &=~ \{H\in\cH \mid H\cap X\neq \emptyset \text{ and } H\setminus X\neq \emptyset \} \\
		\cR = \cR(\cH,X) &=~ \{H\in\cH \mid H\cap X = \emptyset \} \\
		\cT= \cT(\cH,X) &=~ \{ H\setminus X \mid H\in \cQ(\cH,X)\}
	\end{array}
\end{equation}

For a hypergraph $\cH\subseteq 2^V$ we define its \emph{union closure} as 
\[
\cH^u ~=~ \left\{\left. \bigcup_{H\in\cH'}H ~\right|~ \cH'\subseteq \cH \right\}.
\]

\begin{lemma}\label{l-union-signature}
	For every $U\in\cH^u$ and binary vector $\by\in \BB^\cH$ defined by 
	\[
	y_H=
	\begin{cases}
		1 & \text{if } H\subseteq U, \text{ and }\\
		0 & \text{otherwise}
	\end{cases}
	~~~~~~\text{ for all }~~~ H\in\cH,
	\]
	we have $\by\in \YY_\cH$. Conversely, for every $\by\in \YY_\cH$ we have
	\[
	\bigcup_{H\in\cH\atop y_H=1} H ~\in~ \cH^u.
	\]
	Consequently, we have $|\cH^u|=|\YY_\cH|$. 
\end{lemma}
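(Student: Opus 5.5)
For the first claim I take $U\in\cH^u$, say $U=\bigcup_{H\in\cH'}H$ for some $\cH'\subseteq\cH$, and use the characteristic vector of $U$ as the witness: set $\bx\in\BB^V$ with $x_v=1$ if and only if $v\in U$. By \eqref{e-y(x)}, $y_H(\bx)=\prod_{v\in H}x_v$ equals $1$ exactly when $H\subseteq U$. So $\by(\bx)$ coincides with the vector $\by$ defined in the statement, and hence $\by\in\YY_\cH$. The case $\cH'=\emptyset$, giving $U=\emptyset$, is covered by the same argument; it only matters that the empty set is counted in $\cH^u$.

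For the converse I take $\by=\by(\bx)\in\YY_\cH$ for some $\bx\in\BB^V$. Then $\bigcup_{H:\,y_H=1}H$ is the union of the subfamily $\cH'=\{H\in\cH\mid y_H=1\}$, so it lies in $\cH^u$ directly from the definition of the union closure.

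For the cardinality claim I consider two maps. The first is $\phi:\cH^u\to\YY_\cH$, sending $U$ to the vector $\by$ of the statement. The second is $\psi:\YY_\cH\to\cH^u$, sending $\by$ to $\bigcup_{y_H=1}H$. I will show that $\psi\circ\phi=\mathrm{id}$ and $\phi\circ\psi=\mathrm{id}$, which makes both maps bijections.

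To see $\psi(\phi(U))=U$, write $U=\bigcup_{H\in\cH'}H$. Every $H$ with $H\subseteq U$ is contained in $U$, and every $H\in\cH'$ satisfies $H\subseteq U$. Therefore the union of all $H\subseteq U$ is exactly $U$.

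The step needing the most care is $\phi(\psi(\by))=\by$. Let $\by=\by(\bx)$ and put $W=\psi(\by)$. If $y_H=1$, then clearly $H\subseteq W$. Conversely, suppose $H\subseteq W$. Every $v\in W$ lies in some $H'$ with $y_{H'}(\bx)=1$, which forces $x_v=1$. Hence $x_v=1$ for all $v\in H$, so $y_H(\bx)=1$. This is exactly where we use that $\by$ is a genuine signature rather than an arbitrary binary vector. With both identities established, $|\cH^u|=|\YY_\cH|$.
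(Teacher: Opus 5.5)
Your proof is correct and takes the same approach as the paper, which simply declares the lemma immediate from the definitions of signatures and union closures. Your explicit check that $U\mapsto\by$ and $\by\mapsto\bigcup_{y_H=1}H$ are mutually inverse is the detail the paper leaves implicit behind its ``Consequently''; in particular, you correctly note that the identity $\phi(\psi(\by))=\by$ needs $\by$ to be a genuine signature.
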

\begin{proof}
	It is immediate by the definitions of signatures and union-closures. 
\end{proof}
Thus, the union closure of a hypergraph and its signature space are in a one-to-one correspondence. While in the proofs of the previous section it was advantageous to consider the signature space, in the sequel we switch to the equivalent union closure. 

Let us observe finally that according to \Cref{t-main-X-reduction} we have for all $X\subseteq V$ the containment
\begin{equation}\label{e-th4-consequence}
	\cH\big(f^{V\setminus X}_{\cH,\bc}\big) \subseteq \cR(\cH,X)\cup \cT(\cH,X)^u.
\end{equation}
In fact, for some $\bc\in \ZZ^\cH$ we may have equality here\footnote{Well, given the relative unimportance of attaining equality, it seems surprisingly complicated to settle it rigorously.}. Thus, we feel justified to call the hypergraph $\cR(\cH,X)\cup \cT(\cH,X)^u$ the \emph{$(V\setminus X)$-projection} of $\cH$. 

\bigskip
\bigskip

We use \Cref{t-main-X-reduction} first to derive a sufficient condition for an efficient single step of the above variable elimination scheme.

\begin{definition}\label{def-nest-set}
	Given positive integers $\tau$ and $K$, let us call a nonempty subset $X\subseteq V$ a $(\tau,K)$-nest set of $\cH\subseteq 2^V$ if $|X|\leq \tau$ and $|\cT(\cH,X)^u|\leq K$.
\end{definition}

\begin{lemma}\label{lem-nest-set-computation}
Given a hypergraph $\cH\subseteq 2^V$, we can find all of its $(\tau,K)$-nest sets in $O(K|V|^{\tau +1}|\cH|)$ time. 
\end{lemma}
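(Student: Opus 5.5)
The plan is brute-force enumeration of the candidate sets $X$, combined with the early-stopping signature generation of \Cref{t-signature-generation}. First, there are at most $\sum_{t=1}^{\tau}\binom{|V|}{t}=O(|V|^{\tau})$ nonempty subsets $X\subseteq V$ with $|X|\leq\tau$. (For $\tau\geq 1$ and $|V|\geq 2$ this sum is bounded by a constant times $|V|^\tau$; the trivial case $|V|=1$ can be handled separately.) We enumerate them one by one. For each such $X$, we then decide in $O(K|V||\cH|)$ time whether $|\cT(\cH,X)^u|\leq K$. Multiplying these two counts gives the claimed $O(K|V|^{\tau+1}|\cH|)$ bound.

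For a fixed $X$, we proceed in three steps.

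\emph{Building $\cT$.} We first build $\cQ(\cH,X)$ and $\cT=\cT(\cH,X)$ by scanning every $H\in\cH$ and testing $H\cap X$ and $H\setminus X$. This costs $O(|V||\cH|)$ time, using characteristic vectors of the sets. Duplicates in $\cT$ can either be removed or simply kept: keeping them does not change $\cT^u$, and we still have $|\cT|\leq|\cH|$.

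\emph{Counting.} By \Cref{l-union-signature} we have $|\cT^u|=|\YY_\cT|$, so it suffices to compare $|\YY_\cT|$ with $K$. We invoke \Cref{t-signature-generation} on the hypergraph $\cT\subseteq 2^{V\setminus X}$ with the threshold $K+1$. It either certifies that $|\YY_\cT|<K+1$, i.e.\ $|\YY_\cT|\leq K$, or produces $K+1$ distinct signatures, which shows that $|\YY_\cT|>K$. Either way the call takes $(K+1)\cdot O(|V||\cT|)=O(K|V||\cH|)$ time. We declare $X$ a $(\tau,K)$-nest set exactly in the first case.

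\emph{Summing up.} The total over all $O(|V|^\tau)$ candidate sets is $O(K|V|^{\tau+1}|\cH|)$, as claimed. If one also wants the nest sets output together with $\cT^u$, the signatures produced in the first case can be turned into unions in $O(K|V||\cH|)$ time per set, so the bound is unaffected.

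The only delicate point is the use of \Cref{t-signature-generation} as an early-terminating test. Its running time has to depend on the threshold $K$ and not on the possibly exponential size $|\YY_\cT|$. That is precisely the form in which \Cref{t-signature-generation} is stated, so I expect no real obstacle beyond this bookkeeping.
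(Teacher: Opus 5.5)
Your proposal is correct and follows the paper's proof. The paper enumerates the $O(|V|^\tau)$ candidate sets $X$ and decides $|\cT(\cH,X)^u|\leq K$ through \Cref{l-union-signature} and the early-stopping generation of \Cref{t-signature-generation}, and so do you. You are somewhat more careful than the paper: you state the $O(|V||\cH|)$ cost of building $\cT$, and you use the threshold $K+1$, which is needed to decide $\leq K$ rather than $<K$.
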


\begin{proof}
	We try all nonempty subsets $X\subseteq V$, $|X|\leq \tau$, and for each we compute $|\YY_{\cT(\cH,X)}|$. By \Cref{l-union-signature} we have $|\cT(\cH,X)^u|=|\YY_{\cT(\cH,X)}|$, and thus the claim follows by \Cref{t-signature-generation}. 
\end{proof}

\begin{corollary}\label{cor-one-step}
	Given a hypergraph $\cH\subseteq 2^V$ and two constants $\tau$ and $\gb$ we can test in $O(|V|^{\gb +\tau +1}|\cH|)$ time if it has a $(\tau,|V|^\gb)$-nest set, and if it has, then we can compute the projection \eqref{e-main-theorem} in $O(|V|^{\max(1+\gb,2\gb)}|\cH|)$ time. 
\end{corollary}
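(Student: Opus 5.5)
The plan is to obtain this corollary by instantiating \Cref{lem-nest-set-computation} and \Cref{t-main-X-reduction} with $K=|V|^\gb$ and then simplifying the bounds under the assumption that $\tau$ and $\gb$ are constants.

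\emph{Testing step.} Apply \Cref{lem-nest-set-computation} with $K=|V|^\gb$. Enumerating all nonempty $X\subseteq V$ with $|X|\le\tau$ gives $O(|V|^\tau)$ candidates. For each candidate, run the generation procedure of \Cref{t-signature-generation} on $\cT(\cH,X)$ with threshold $K$, stopping as soon as $K$ distinct signatures appear. Forming $\cT(\cH,X)$ takes $O(|V||\cH|)$ time, and the truncated generation takes $K\cdot O(|V||\cH|)$. So each candidate costs $O(|V|^{\gb+1}|\cH|)$, and the total is $O(|V|^{\gb+\tau+1}|\cH|)$, as claimed. By \Cref{l-union-signature}, $|\YY_{\cT(\cH,X)}|=|\cT(\cH,X)^u|$, so this test decides exactly whether $X$ is a $(\tau,|V|^\gb)$-nest set. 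The subtle point is the boundary case: the generation certifies $|\YY_\cT|<K$ or produces $K$ elements, so I would call it with threshold $K+1$ to distinguish ``$\le K$''. This changes only constants.

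\emph{Projection step.} Fix a nest set $X$ found above, so $|X|\le\tau$ and $|\YY_\cT|\le|V|^\gb$. Plug these into the running time $O(|\cH||\YY_\cT|\max\{|V|,2^{|X|}|X|,|\YY_\cT|\})$ of \Cref{t-main-X-reduction}. Since $\tau$ is constant, $2^{|X|}|X|\le 2^\tau\tau=O(1)$, so the maximum is $O(\max\{|V|,|V|^\gb\})$. The total is therefore
\[
O\bigl(|\cH|\,|V|^\gb\max\{|V|,|V|^\gb\}\bigr)=O\bigl(|V|^{\max(1+\gb,2\gb)}|\cH|\bigr).
\]
We also need $Y=V\setminus X\neq\emptyset$ for the partition to be nontrivial. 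If $X=V$, then $|V|\le\tau$ is constant and the problem is solved by brute force in $O(2^\tau|\cH|)$ time, so this case can be excluded or handled trivially.

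The only real obstacle is bookkeeping rather than mathematics. One must check that the signature set computed during the test can be reused, or recomputed within the stated budget, for the projection. Recomputing costs $O(|V|^{\gb+1}|\cH|)$, which is dominated by the bound above. One must also check that the hidden constants depending on $\tau$ ($2^\tau$, $\tau$) are legitimately absorbed because $\tau$ is fixed. I would state both explicitly and conclude.
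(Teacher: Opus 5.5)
Your proposal is correct and follows the paper's proof, which simply invokes \Cref{lem-nest-set-computation} with $K=|V|^{\beta}$ and then the running-time bound of \Cref{t-main-X-reduction} with $|X|\le\tau$ and $|\YY_\cT|\le |V|^{\beta}$. Your extra bookkeeping makes explicit what the paper leaves implicit: calling the generator with threshold $K+1$ to decide ``$\le K$'' exactly, absorbing $2^{\tau}\tau$ into the constant, and setting aside the degenerate case $X=V$.
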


\begin{proof}
	Follows by \Cref{lem-nest-set-computation} and \Cref{t-main-X-reduction}.
\end{proof}

\bigskip
While such a single step variable elimination could be helpful in reducing/simplifying BPO problems, our purpose here is to solve BPO problems by recursively applying the above elimination step.  

We describe below a fully general solution algorithm for BPO problems that applies the above ideas. We will analyze its efficiency later  for special hypergraph classes.

\bigskip

\noindent\textsc{Procedure SBVE} (Signatures Based Variable Elimination)

\noindent\hspace*{1cm}\textbf{Input}: an integer threshold $\tau\in \ZZ_+$, \\
\noindent\hspace*{2.25cm}a hypergraph $\cH\subseteq 2^V$ and \\
\noindent\hspace*{2.25cm}a coefficient vector $\bc\in\ZZ^\cH$. 

\noindent\hspace*{1cm}\textbf{Initialize}: $k=1$, $W_1=V$, and $g_1=f_{\cH,\bc}$ (as in \eqref{e-BPO})

\noindent\hspace*{1cm}\textbf{While} $|W_k|\geq \tau$ \textbf{do}:

\noindent\hspace*{2.25cm}\textbf{Step 1}: Choose $X_k\subseteq W_k$, $1\leq |X_k|\leq \tau$, and set $Y_k=W_k\setminus X_k$.

\noindent\hspace*{2.25cm}\textbf{Step 2}: Compute $\cT_k=\cT(\cH(g_k),X_k)$ from $\cH(g_k)$ (as in \eqref{e-PQRT-updated})

\noindent\hspace*{2.25cm}\textbf{Step 3}: Compute $\YY_{\cT_k}$, 

\noindent\hspace*{2.25cm}\textbf{Step 4}: Compute $\mu(\by)$ and a binary vector $\bx(\by)\in \BB^{X_k}$ attaining \\$~$\hfill  the maximum in \eqref{e-mu}, for each $\by\in \YY_{\cT_k}$. 

\noindent\hspace*{2.25cm}\textbf{Step 5}: Compute $h=g_k^{Y_k}$ (as in \Cref{t-main-X-reduction}).

\noindent\hspace*{2.25cm}\textbf{Step 6}: Update $g_{k+1}=h$, $W_{k+1}=Y_k$, and set $k=k+1$.

\noindent\hspace*{1cm}\textbf{EndWhile}: 

\noindent\hspace*{1cm}\textbf{Finalize}: Compute $Z=\displaystyle\max_{\bx\in \BB^{W_k}} g_k(\bx)$, a binary vector $\bx_k$ attaining \\$~$\hfill this maximum, and set $\ell=k-1$. 

\noindent\hspace*{1cm}\textbf{While} $\ell \geq 1$ \textbf{do}:

\noindent\hspace*{2.25cm}\textbf{Step 7}: Compute $\by_\ell = \by(\bx_{\ell+1})\in \YY_{\cT_\ell}$.

\noindent\hspace*{2.25cm}\textbf{Step 8}: Set $\bx_\ell = (\bx(\by_\ell),\bx_{\ell+1})\in \BB^{X_\ell\cup Y_\ell}$ (concatenation)

\noindent\hspace*{2.25cm}\textbf{Step 9}: Update $\ell=\ell-1$.

\noindent\hspace*{1cm}\textbf{EndWhile}: 

\noindent\hspace*{1cm}\textbf{Output}: $Z$ and $\bx^*=\bx_1$.

\begin{corollary}\label{cor-SBVE}
	\textsc{Procedure SBVE} correctly outputs the optimum value $Z$ and an optimal solution $\bx^*$ of problem \eqref{e-BPO}. Furthermore, the logarithmic size of the coefficients of functions $h$ computed in Step 5 remain polynomially bounded by the input (binary) size and $\tau$ as long as $\tau$ is limited by a polynomial of the input size. 
\end{corollary}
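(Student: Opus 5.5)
The proof splits into a correctness part and a coefficient-size part. Correctness is an induction along the sequence of projections. The size claim needs a sharper recursive analysis of the $\gd$ and $\mu$ values than the crude bound stated in \Cref{t-main-X-reduction}.

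\textbf{Correctness.} First I would show by induction on $k$ that $g_{k+1}=g_k^{Y_k}$. This holds as an identity of pseudo-Boolean functions on $\BB^{Y_k}$ by \Cref{t-main-X-reduction}. Because $g_k$ is kept in its unique multilinear form, we have $g_k=f_{\cH(g_k),\bc_k}$ with integer coefficients, so the theorem applies at every iteration. Since $W_{k+1}=W_k\setminus X_k$, the nested maximizations compose, and
\[
g_k(\bx^{W_k})=\max_{\bx^{V\setminus W_k}} f_{\cH,\bc}(\bx^{V\setminus W_k},\bx^{W_k}).
\]
The final brute-force step is over fewer than $\tau$ variables, so $Z=\max f_{\cH,\bc}$.

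For the backward pass I would prove, by downward induction on $\ell$, that $\bx_\ell\in\BB^{W_\ell}$ satisfies $g_\ell(\bx_\ell)=Z$. The induction hypothesis gives $g_{\ell+1}(\bx_{\ell+1})=Z$. Split $g_\ell=\ga+\gb+\gc$ with respect to $X_\ell$. By \Cref{l-first-rewrite},
\[
g_{\ell+1}(\bx_{\ell+1})=\gc(\bx_{\ell+1})+\mu(\by_\ell), \qquad \by_\ell=\by(\bx_{\ell+1}).
\]
Since $\bx(\by_\ell)$ attains $\mu(\by_\ell)$, we get $\ga+\gb_{\by_\ell}$ evaluated at $\bx(\by_\ell)$ equal to $\mu(\by_\ell)$. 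Hence $g_\ell(\bx(\by_\ell),\bx_{\ell+1})=Z$. At $\ell=1$ this says $\bx^*$ is optimal for $f_{\cH,\bc}$.

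\textbf{Coefficient size.} This is the main obstacle. Iterating the bound $Z(f^Y)\le O(Z\,|\YY_\cT|\,|V|^{|V|+1})$ from \Cref{t-main-X-reduction} up to $|V|/\tau$ times gives $\log Z(g_k)$ growing like $k\,|V|\log|V|$. That is still polynomial in $|V|$, so a naive argument already yields a polynomial bound. However, the exponent $|V|$ comes from the crude induction depth used for $\gd$. My plan is to use instead an intrinsic bound that does not accumulate over the iterations.

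\textbf{Step 1 (intrinsic bound).} Every $g_k$ is a projection of $f_{\cH,\bc}$, so $|g_k(\bx)|\le Z(f_{\cH,\bc})$ for every $\bx$.

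\textbf{Step 2 (Möbius inversion).} The coefficient of the monomial $\prod_{v\in S}x_v$ in the unique multilinear form is the alternating sum $\sum_{U\subseteq S}(-1)^{|S\setminus U|}g(\mathbf{1}_U)$. Only sets $S\in\cH(g_k)$ contribute monomials, so each coefficient is bounded by $2^{|S|}Z(f_{\cH,\bc})\le 2^{|V|}Z(f_{\cH,\bc})$.

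\textbf{Step 3 (conclusion).} Taking logarithms, each coefficient of $g_k$ has bit size at most
\[
|V|+\log Z(f_{\cH,\bc}),
\]
which is polynomial in the input size, uniformly in $k$ and independent of $\tau$. The hypothesis on $\tau$ is then only needed to keep the number of iterations, and the $2^{|X_k|}$ work in Step 4, within the stated regime.
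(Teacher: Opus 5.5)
Your proposal is correct. The correctness half is what the paper's one-line proof (citing \eqref{e-eliminateX} and \eqref{e-mu}) leaves implicit. Forward, you compose the projections. Backward, you combine \Cref{l-first-rewrite} with the fact that $\bx(\by_\ell)$ attains $\mu(\by_\ell)$, which gives $g_\ell(\bx_\ell)=g_{\ell+1}(\bx_{\ell+1})=Z$.

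For the coefficient-size half you take a genuinely different route. The paper relies on the coefficient bound in \Cref{t-main-X-reduction}, i.e.\ on growth estimates for the quantities $\mu(\by^*)$ and $\gd(\by^*,\by)$ that Step 5 combines, applied once per elimination step. Iterated over the at most $|V|$ steps, the bound is multiplied by roughly $|\YY_\cT|\,|V|^{|V|+1}$ each time. That still gives polynomially many bits, but it deteriorates with the number of iterations. You instead note that every $g_k$ is a partial maximum of $f_{\cH,\bc}$, so $|g_k|\le Z(f_{\cH,\bc})$ pointwise. M\"obius inversion over the subsets of $S$ then bounds each coefficient by $|V|+\log Z(f_{\cH,\bc})$ bits, uniformly in $k$.

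What each approach buys:
\begin{itemize}
\item Your argument is shorter and sharper, it does not depend on how $h$ is computed, and it bypasses the estimates on $\gd$ entirely.
\item The paper's route also controls the intermediate numbers $\gd(\by^*,\by)$ used while computing $h$. These are the M\"obius-function values of the poset $(\YY_\cT,\le)$, and bounding them matters for the bit complexity of Step 5 itself. The statement, however, only concerns the coefficients of $h$, so your argument suffices.
\end{itemize}

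Two small corrections to your framing. First, you announce that a sharper recursive analysis of $\gd$ and $\mu$ is needed. As you note yourself, the naive iteration already suffices, and your actual argument never touches $\gd$ or $\mu$. Second, each iteration removes at least one variable, so there are at most $|V|$ iterations whatever $\tau$ is. Your bound therefore shows the hypothesis on $\tau$ is superfluous for the size claim, not ``needed to keep the number of iterations'' in check.
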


\begin{proof}
	Follows by \Cref{t-main-X-reduction} and the definitions, \eqref{e-eliminateX} and \eqref{e-mu}.
\end{proof}

Let us remark last that Step 1 in the above procedure is quite vague (on purpose), while all other steps are described precisely, and are based on the computational ideas we used in some of our proofs above. We shall provide precise description for Step 1 tailored to the structure of the special hypergraph classes we consider below. In fact, we choose in Step 1 a $(\tau,K)$-nest for a particular value of $\tau=K$ such that the considered special hypergraph class guarantees not only the existence of such a nest set, but also its polynomial computability. Note that if $\tau$ is bounded polynomially by the input size, then the coefficients encountered in this procedure all stay polynomially bounded by the input size due to \Cref{t-main-X-reduction}.

\section{Applications}

In this section we utilize the explicit description of a projection in \Cref{t-main-X-reduction} allowing us to analyze the repeated application of this variable elimination (projection) process to describe specially structured families of hypergraphs for which \textsc{Procedure SBVE} can provide an efficient solution of the corresponding BPO problem.

\subsection{Union Bounded Problems}

First we consider those hypergraphs the union closure of which is ``not too large''. 

\begin{definition}\label{def-union-bounded}
	For an integer $K\in\ZZ_+$ we call a hypergraph $\cH$ \emph{$K$-union bounded} if $|\cH^u|\leq K$.
\end{definition}
For a $K$-union bounded hypergraph $\cH\subseteq 2^V$ we also call a corresponding multilinear polynomial $f_{\cH,\bc}$ and the corresponding BPO problem \eqref{e-BPO} \emph{$K$-union bounded} for every $\bc\in\ZZ^\cH$.

\begin{lemma}\label{l-union-bounded-recognition}
	$K$-union bounded hypergraphs can be recognized in $K\cdot O(|V||\cH|)$ time. 
\end{lemma}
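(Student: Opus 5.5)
The recognition reduces to counting signatures. By \Cref{l-union-signature}, the union closure $\cH^u$ and the signature space $\YY_\cH$ are in one-to-one correspondence, so $|\cH^u|=|\YY_\cH|$. Deciding whether $\cH$ is $K$-union bounded is therefore the same as deciding whether $|\YY_\cH|\leq K$, and the plan is to obtain this directly from the enumeration result of \Cref{t-signature-generation}.

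We apply \Cref{t-signature-generation} to $\cH$ with threshold $K+1$ in place of $K$. The procedure has two possible outcomes, and each settles the question.
\begin{itemize}
\item It may generate all of $\YY_\cH$ and certify $|\YY_\cH|<K+1$, that is $|\YY_\cH|\leq K$. Then $|\cH^u|\leq K$, and $\cH$ is $K$-union bounded. As a by-product, the sets $\bigcup_{H:\,y_H=1}H$ for $\by\in\YY_\cH$ list $\cH^u$ explicitly.
\item It may instead produce $K+1$ distinct signatures. Then $|\cH^u|=|\YY_\cH|\geq K+1$, and $\cH$ is not $K$-union bounded.
\end{itemize}

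The running time is $(K+1)\cdot O(|V||\cH|)=K\cdot O(|V||\cH|)$, since $K\geq 1$ is a positive integer. The case $K=0$ is trivial: $\cH^u$ always contains the empty union, so it is nonempty and no hypergraph is $0$-union bounded.

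The only step needing care is the off-by-one between the strict inequality in \Cref{t-signature-generation} and the non-strict bound in \Cref{def-union-bounded}. Running the enumeration with threshold $K+1$ handles this at no asymptotic cost. Everything else is immediate from \Cref{l-union-signature}.
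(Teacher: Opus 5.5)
Your proposal is correct and follows the paper's route: the paper's proof is a one-line appeal to \Cref{t-signature-generation}, implicitly combined with $|\cH^u|=|\YY_\cH|$ from \Cref{l-union-signature}. You also handle the strict-versus-non-strict inequality by running the enumeration with threshold $K+1$, a correct detail the paper leaves unstated.
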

\begin{proof}
	Follows by \Cref{t-signature-generation}.
\end{proof}
	
\begin{theorem}\label{t-main-union-bounded}
	Assume $\cH\subseteq 2^V$ is a hypergraph, and $V=X\cup Y$ is a nontrivial partition. Then for all $\bc\in\ZZ^\cH$ we have $|\cH\big(f^Y_{\cH,\bc}\big)^u|\leq |\cH^u|$ and $|\cT(\cH,X)^u|\leq |\cH^u|$. 
\end{theorem}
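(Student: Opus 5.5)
Both inequalities will come from explicit maps into $\cH^u$, using the containment \eqref{e-th4-consequence}
\[
\cH\big(f^Y_{\cH,\bc}\big)\subseteq \cR(\cH,X)\cup\cT(\cH,X)^u .
\]

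\textbf{Second inequality.} I would first prove $|\cT(\cH,X)^u|\leq|\cH^u|$. Define $\phi:\cH^u\to\cT(\cH,X)^u$ by $\phi(U)=U\cap Y$, restricting the domain to unions of members of $\cQ$ (i.e.\ to $\cQ^u\subseteq\cH^u$). Every element of $\cT^u$ has the form
\[
\bigcup_{H\in\cQ'}(H\cap Y)=\Big(\bigcup_{H\in\cQ'}H\Big)\cap Y \qquad\text{for some } \cQ'\subseteq\cQ.
\]
It is therefore the image of $\bigcup_{H\in\cQ'}H\in\cQ^u\subseteq\cH^u$, so $\phi$ is surjective onto $\cT^u$. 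The empty union maps to the empty set, which is consistent. Hence $|\cT^u|\leq|\cQ^u|\leq|\cH^u|$.

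\textbf{First inequality.} Let $\cG=\cH(f^Y_{\cH,\bc})$, so that $\cG\subseteq\cR\cup\cT^u$. Then
\[
\cG^u\subseteq(\cR\cup\cT^u)^u=\{\,A\cup B \mid A\in\cR^u,\ B\in\cT^u\,\},
\]
using that $\cT^u$ is already union closed and contains $\emptyset$. I would map each pair to $\cH^u$ by
\[
\psi(A\cup B)=A\cup\Big(\bigcup_{H\in\cQ'}H\Big),
\]
where $\cQ'\subseteq\cQ$ is chosen with $B=\bigcup_{H\in\cQ'}(H\cap Y)$. Intersecting with $Y$ recovers $A\cup B$, because $A\subseteq Y$. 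So $W\mapsto\psi(W)$ is injective from $(\cR\cup\cT^u)^u$ into $\cH^u$, with left inverse $U\mapsto U\cap Y$. Equivalently, the whole set $(\cR\cup\cT^u)^u$ equals $\{U\cap Y\mid U\in(\cR\cup\cQ)^u\}$, which is the image of a map out of a subset of $\cH^u$. Therefore $|\cG^u|\leq|(\cR\cup\cT^u)^u|\leq|\cH^u|$.

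\textbf{Main obstacle.} The only delicate point is that \eqref{e-th4-consequence} uses the union closure $\cT^u$, not $\cT$ itself. I need to check that taking the union closure of $\cR\cup\cT^u$ does not exceed images of intersections with $Y$; the identity $(\bigcup H)\cap Y=\bigcup(H\cap Y)$ handles this. I also need to double-check the convention on empty unions and on a possible constant term (the empty hyperedge), so that the counts of $\cH^u$ and of the projected family match in how they treat $\emptyset$.
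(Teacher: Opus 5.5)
Your proposal is correct and follows essentially the paper's route. Both arguments combine the containment \eqref{e-th4-consequence} with the identity $(\cR\cup\cT^u)^u=(\cR\cup\cT)^u=\{U\setminus X\mid U\in\cH^u\}$, which is your map $U\mapsto U\cap Y$. This exhibits both $\cT^u$ and a superset of $\cH\big(f^Y_{\cH,\bc}\big)^u$ as images of (subsets of) $\cH^u$. Your version just spells out the surjectivity and injectivity, and the empty-union convention, more explicitly than the paper's one-line chain of inclusions.
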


\begin{proof}
	By \Cref{t-main-X-reduction} and \Cref{l-union-signature} we have 
	\[
	\begin{array}{rl}
	\cH\big(f^Y_{\cH,\bc}\big)^u ~\subseteq ~ \big(\cR(\cH,X) \cup \cT(\cH,X)^u\big)^u &\subseteq ~ \big(\cR(\cH,X)\cup \cT(\cH,X)\big)^u\\
	&=~ \{U\setminus X\mid U\in\cH^u\}
	\end{array}
	\]
	from which both inequalities follow.
\end{proof}

\begin{corollary}\label{cor-union-bounded}
	The projection of a $K$-union bounded hypergraph $\cH$ is also $K$-union bounded. Furthermore, every subset $X\subseteq V$, $|X|\leq \tau$ is a $(\tau,K)$-nest set.  Consequently, for constant $\tau$ and $k$, an $O(n^k)$-union bounded BPO problem is solved by \textsc{Procedure SBVE} in strongly polynomial time. \qed
\end{corollary}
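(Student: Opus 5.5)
The corollary has three parts, and I will take them in order, relying on \Cref{t-main-union-bounded}, \Cref{t-main-X-reduction}, \Cref{cor-SBVE} and \Cref{t-signature-generation}.

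\emph{Closure under projection.} Suppose $|\cH^u|\leq K$ and $X\subseteq V$ is nonempty with $X\neq V$. By \Cref{t-main-union-bounded},
\[
|\cH(f^{V\setminus X}_{\cH,\bc})^u|\leq|\cH^u|\leq K
\]
for every $\bc$. The hypergraph-level version follows from the chain of inclusions in that proof: the $(V\setminus X)$-projection $\cR(\cH,X)\cup\cT(\cH,X)^u$ has union closure contained in $\{U\setminus X\mid U\in\cH^u\}$, so its size is at most $K$.

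\emph{Every small set is a nest set.} Let $X$ satisfy $1\le|X|\le\tau$. The second inequality of \Cref{t-main-union-bounded} gives $|\cT(\cH,X)^u|\leq|\cH^u|\leq K$. By \Cref{def-nest-set}, $X$ is then a $(\tau,K)$-nest set.

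\emph{Running time.} Fix constants $\tau$ and $k$, and suppose $|\cH^u|\le K=O(n^k)$. I run \textsc{Procedure SBVE} with Step 1 choosing any $X_k\subseteq W_k$ of size $\min(\tau,|W_k|)$.
\begin{itemize}
\item By induction on the iteration, using the first part, each $g_j$ satisfies $|\cH(g_j)^u|\le K$. Since $\cH(g_j)\subseteq\cH(g_j)^u$, also $|\cH(g_j)|\le K$.
\item By the second part, $|\YY_{\cT_j}|=|\cT_j^u|\le K$, using \Cref{l-union-signature} for the equality.
\item By \Cref{t-main-X-reduction}, each iteration costs $O(K\cdot K\max\{n,2^\tau\tau,K\})$, which is polynomial in $n$.
\item There are at most $n$ iterations.
\item The finalize step enumerates $2^{|W_k|}\le2^\tau$ assignments.
\item Back-substitution costs $O(n|\cH(g_\ell)|)$ per step.
\end{itemize}
So the total number of arithmetic operations is polynomial in $n$ and $|\cH|$ and does not depend on the coefficients. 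Correctness is given by \Cref{cor-SBVE}.

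The main obstacle is the word ``strongly'': the number of operations must not depend on the coefficient values, and the intermediate numbers must have polynomial bit length. Operation counts involve only $n$, $K$ and $\tau$, so the first requirement holds. For the second, I would appeal to \Cref{cor-SBVE} with $\tau$ constant.

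However, iterating the bound in \Cref{t-main-X-reduction} naively could compound a $|V|^{|V|}$ factor at each step. The step I expect to be hardest is showing this does not happen. I would argue it directly instead. Each $g_j$ is the projection of the original integer function $f_{\cH,\bc}$ onto $W_j$, so every value of $g_j$ is bounded by $Z(f_{\cH,\bc})$. The multilinear coefficient of $g_j$ on a set $S$ is a M\"obius sum $\sum_{S'\subseteq S}(-1)^{|S\setminus S'|}g_j(\mathbf 1_{S'})$. This bounds it by $2^{|S|}Z\le2^nZ$, so each coefficient has $O(n+\log Z)$ bits. That bound is polynomial and uniform across iterations, and it gives strong polynomiality.
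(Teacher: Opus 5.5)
Your proof is correct. For the first two claims it is the paper's argument: both inequalities come from \Cref{t-main-union-bounded}, and your observation that $(\cR(\cH,X)\cup\cT(\cH,X)^u)^u=(\cR(\cH,X)\cup\cT(\cH,X))^u$ gives the hypergraph-level version. For the algorithmic claim, the paper only says that Step~1 may pick any set of size at most $\tau$ and leaves the rest to \Cref{t-main-X-reduction} and \Cref{cor-SBVE}. Your round-by-round accounting ($|\cH(g_j)|\le|\cH(g_j)^u|\le K$, $|\YY_{\cT_j}|\le K$, at most $n$ rounds) is what that appeal amounts to.

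The genuine difference is the bit-length argument. The paper relies on the per-step coefficient-growth estimate of \Cref{t-main-X-reduction}, iterated implicitly in \Cref{cor-SBVE}. You instead note that each $g_j$ is the projection of the original $f_{\cH,\bc}$, so $|g_j|\le Z(f_{\cH,\bc})$ pointwise. M\"obius inversion then bounds every coefficient of every $g_j$ by $2^nZ(f_{\cH,\bc})$. This bound is tighter, uniform in $j$, and independent of the rather loosely derived growth estimate in \Cref{t-main-X-reduction}, so it is a worthwhile replacement.

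Your worry about compounding is overstated, though. With $|\YY_{\cT_j}|\le K=O(n^k)$ the per-step growth factor has only $O(n\log n)$ bits. Iterating it over at most $n$ rounds therefore adds just $O(n^2\log n)$ bits, so the appeal to \Cref{cor-SBVE} alone would already have sufficed.

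Two small points for completeness:
\begin{itemize}
\item Numbers arising inside a round also need bounds. $|\mu(\by)|$ is at most the sum of absolute coefficients of $g_j$, hence at most $K2^nZ(f_{\cH,\bc})$. The $\gd$ values are bounded by $K^n$ regardless of the coefficients.
\item When $|W_k|=\tau$, your Step~1 choice yields $Y_k=\emptyset$. This lies outside the nontrivial-partition hypothesis of \Cref{t-main-X-reduction}. It is inherited from the procedure's loop condition and harmless, since the projection is then a constant, but it deserves a remark.
\end{itemize}
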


\begin{proof}
	The claim follows by \Cref{t-main-union-bounded}. Note that in Step 1 of \textsc{Procedure SBVE} we can choose an arbitrary subset of size at most $\tau$.
\end{proof}

Let us next argue that polynomially union bounded BPO problems can in fact be solved efficiently in one step, without a recursive variable elimination. 

For a subset $S\subseteq V$ we denote by $\gk(S)\in \BB^V$ the characteristic vector of $S$. To a hypergraph $\cH\subseteq 2^V$ we associate the family of characteristic vectors of the subsets in its union closure:

\begin{equation}\label{e-char-of-union-closure}
	\UU(\cH) ~=~ \{\gk(U)\mid U\in\cH^u\} ~\subseteq~ \BB^V.
\end{equation}

\begin{theorem}\label{t-BPO-reduction-to-union-closure}
	For all hypergraphs $\cH\subseteq 2^V$ and vectors $\bc\in \ZZ^\cH$ we have 
	\[
	\max_{\bx\in \BB^V} f_{\cH,\bc}(\bx) ~=~ \max_{\bx\in \UU(\cH)} f_{\cH,\bc}(\bx).
	\]
\end{theorem}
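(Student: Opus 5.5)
Since $\UU(\cH)\subseteq\BB^V$, the inequality ``$\geq$'' is trivial. For ``$\leq$'' I will show that every $\bx\in\BB^V$ can be replaced by a point of $\UU(\cH)$ that has the same objective value. The natural candidate is obtained by shrinking the support of $\bx$ to the union of the hyperedges it ``activates''. Given $\bx\in\BB^V$, let $S=\{v\in V\mid x_v=1\}$ and define
\[
U ~=~ \bigcup_{H\in\cH\atop H\subseteq S} H .
\]
Then $U\in\cH^u$ by definition of the union closure. The empty subfamily is allowed and gives $U=\emptyset\in\cH^u$, so this works even if no hyperedge lies in $S$. Put $\bx'=\gk(U)\in\UU(\cH)$.

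The key step is to verify that $y_H(\bx')=y_H(\bx)$ for every $H\in\cH$, i.e., that the two vectors have the same signature $\by(\bx')=\by(\bx)$.
\begin{itemize}
\item If $y_H(\bx)=1$, then $H\subseteq S$, so $H\subseteq U$ and hence $y_H(\bx')=1$.
\item If $y_H(\bx)=0$, then $H\not\subseteq S$. Since $U\subseteq S$, also $H\not\subseteq U$, so $y_H(\bx')=0$.
\end{itemize}
This is essentially the first half of \Cref{l-union-signature}, applied to the signature of $\bx$.

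Since $f_{\cH,\bc}(\bx)=\sum_{H\in\cH}c_H\,y_H(\bx)$ depends on $\bx$ only through its signature, it follows that $f_{\cH,\bc}(\bx')=f_{\cH,\bc}(\bx)$. Taking $\bx$ to be a maximizer over $\BB^V$ gives the claimed equality; it also yields an optimal solution lying in $\UU(\cH)$. The only point needing care is the degenerate case $U=\emptyset$, where $\bx'$ is the all-zero vector and only the hyperedge $\emptyset$ (if present) is active. This case is handled uniformly by allowing the empty subfamily in the union closure, so I do not expect a genuine obstacle.
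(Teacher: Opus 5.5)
Your proof is correct and matches the paper's: the paper also maps $\bx$ to $\gk(U(\bx))$, where $U(\bx)$ is the union of the hyperedges contained in the on-set of $\bx$, and notes that $U(\bx)\in\cH^u$ and that the objective value is unchanged. Your check that the signatures of $\bx$ and $\gk(U(\bx))$ coincide spells out the step the paper leaves as an observation.
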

\begin{proof}
	To a binary vector $\bx\in\BB^V$ let us associate its on-set $ON(\bx)=\{v\in V\mid x_v=1\}$, and define 
	\[
	U(\bx)~=~ \bigcup_{H\in \cH\atop H\subseteq ON(\bx)} H. 
	\]
	Observe that for all $\bx\in\BB^V$ we have $U(\bx)\in\cH^u$ and that we have 
	\[
	f_{\cH,\bc}(\bx) ~=~ f_{\cH,\bc}(\gk(U(\bx)))
	\]
	from which the statement follows.
\end{proof}

\begin{corollary}\label{cor-union-bounded-one-step}
	The BPO problem with a $K$-union bounded hypergraph $\cH\subseteq 2^V$ can be solved in $K\cdot O(|V||\cH|)$ time. 
\end{corollary}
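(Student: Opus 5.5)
The plan is to combine \Cref{t-BPO-reduction-to-union-closure} with the generation procedure of \Cref{t-signature-generation}. By \Cref{t-BPO-reduction-to-union-closure}, it suffices to maximize $f_{\cH,\bc}$ over the set $\UU(\cH)$ of characteristic vectors of members of $\cH^u$. Since $\cH$ is $K$-union bounded, this set has at most $K$ elements. The task therefore reduces to enumerating $\cH^u$ and evaluating $f_{\cH,\bc}$ at each member, within the claimed time bound.

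\textbf{Step 1: enumerate the union closure.} Apply \Cref{t-signature-generation} to $\cH$ with the parameter $K+1$. Because $|\YY_\cH|=|\cH^u|\leq K$ by \Cref{l-union-signature}, the procedure terminates with the full list $\YY_\cH$ in $(K+1)\cdot O(|V||\cH|)=K\cdot O(|V||\cH|)$ time. For each signature $\by\in\YY_\cH$, form the set $U_\by=\bigcup_{H\in\cH,\, y_H=1}H$. By \Cref{l-union-signature}, $U_\by\in\cH^u$, and every member of $\cH^u$ arises this way.

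Computing one $U_\by$ costs $O(|V||\cH|)$, since it is a union of at most $|\cH|$ subsets of $V$. So $\UU(\cH)$ is obtained in $K\cdot O(|V||\cH|)$ time in total.

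\textbf{Step 2: evaluate and select.} For each $U\in\cH^u$, evaluate $f_{\cH,\bc}(\gk(U))$. This amounts to checking for every $H\in\cH$ whether $H\subseteq U$ and summing the corresponding $c_H$, which costs $O(|V||\cH|)$ per set. Across at most $K$ sets this is again $K\cdot O(|V||\cH|)$. Output the largest value together with the maximizing $\gk(U)$. Correctness is exactly \Cref{t-BPO-reduction-to-union-closure}.

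Shortcut: evaluation can be skipped entirely, because $f_{\cH,\bc}(\gk(U_\by))=\sum_H c_H y_H$ holds for the signature $\by$ of $\gk(U_\by)$. That signature equals $\by$ itself, since $y_H=1$ exactly when $H\subseteq U_\by$ for a genuine signature. The per-item cost is then only $O(|\cH|)$.

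\textbf{Main obstacle.} The one point needing care is bookkeeping: the arithmetic on the integer coefficients should be counted as unit cost, in line with the strongly polynomial claims elsewhere in the paper. Beyond that, the statement follows directly by combining the two cited results.
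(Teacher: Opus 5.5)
Your proposal is correct and follows the paper's proof: generate $\UU(\cH)$ via \Cref{t-signature-generation}, evaluate $f_{\cH,\bc}$ at each of the at most $K$ resulting vectors, keep the best, and invoke \Cref{t-BPO-reduction-to-union-closure} for correctness. Your extra details (calling the generator with parameter $K+1$ so it returns all of $\YY_\cH$, converting signatures to unions via \Cref{l-union-signature}, and the per-item evaluation cost) only spell out what the paper leaves implicit.
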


\begin{proof}
	According to \Cref{t-signature-generation} we can generate $\UU(\cH)$ in $K\cdot O(|V||\cH|)$ time, and then in the same time we can try each one of those binary vectors and choose the best. The claim follows by \Cref{t-BPO-reduction-to-union-closure}.
\end{proof}

\subsection{Locally Union Bounded Problems}

While union bounded hypergraphs are natural to consider in view of Theorem \ref{t-main-X-reduction}, they form a somewhat narrow class that has no relation to the other classes of hypergraphs for which BPO is known to be solvable efficiently. It is possible however to extend this class to a significantly larger family that still guarantees efficient solvability of problem BPO. 

Assume that $\tau\in\ZZ_+$ is a positive integer, and $K\in \ZZ_+$ as before. 

\begin{definition}\label{def-locally-bounded}
	We say that a hypergraph $\cH\subseteq 2^V$ is \emph{locally $(\tau,K)$-bounded} if
	\begin{itemize}
		\item[(i)] it has a $(\tau,K)$-nest, and 
		\item[(ii)] for each $(\tau,K)$-nest $X\subseteq V$ the projection hypergraph $\cT(\cH,X)^u\cup \cR(\cH,X)$ is also locally $(\tau,K)$-bounded. 
	\end{itemize}
\end{definition}

\begin{corollary}\label{cor-locally-bounded}
	Assume that $\tau$ and $\gb$ are given constants and $K=O(n^\gb)$. Then \textsc{Procedure SBVE} will solve a locally $(\tau,K)$-bounded BPO problem in strongly polynomial time, if in Step 1 of SBVE we choose a $(\tau,K)$-nest. 
\end{corollary}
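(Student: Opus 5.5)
The plan is to argue by induction on the number of iterations of the first While loop of \textsc{Procedure SBVE}, keeping the invariant that every $g_k$ has a locally $(\tau,K)$-bounded support hypergraph. Correctness of the output is then just \Cref{cor-SBVE}, so only the running time and the size of the numbers need work.

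\textbf{Invariant.} I will show that for every $k$ reached in the loop, $\cH(g_k)$ is locally $(\tau,K)$-bounded.

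For $k=1$ this is the hypothesis. Assume it holds for $g_k$. By (i) of \Cref{def-locally-bounded}, $\cH(g_k)$ has a $(\tau,K)$-nest, so Step 1 can choose one, $X_k$. By \eqref{e-th4-consequence},
\[
\cH(g_{k+1})\subseteq \cR(\cH(g_k),X_k)\cup\cT(\cH(g_k),X_k)^u ,
\]
and by (ii) the right-hand side is locally $(\tau,K)$-bounded.

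Here is the main obstacle. \Cref{def-locally-bounded} is stated for the projection hypergraph, while the actual support $\cH(g_{k+1})$ may be a proper subhypergraph of it, because some coefficients can vanish. There are two ways to handle this.
\begin{itemize}
\item Run SBVE on the projection hypergraph itself, keeping zero coefficients as explicit terms. Nothing in the steps requires the coefficients to be nonzero, and \Cref{t-main-X-reduction} works for any $\cH$ and $\bc\in\ZZ^\cH$. I prefer this route.
\item Alternatively, show that nest sets are inherited by subhypergraphs. This is plausible, since removing edges from $\cH$ only removes sets from $\cT$, which can only shrink $\cT^u$. But property (ii) would then need a monotonicity argument of its own.
\end{itemize}
So I will phrase the invariant for the ``formal'' hypergraphs $\cH_{k+1}=\cR(\cH_k,X_k)\cup\cT(\cH_k,X_k)^u$, with $\cH_1=\cH$.

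\textbf{Per-iteration cost.} Each iteration removes $|X_k|\ge1$ vertices, so there are at most $n$ iterations. Step 1 finds a nest set by \Cref{lem-nest-set-computation} in $O(K n^{\tau+1}|\cH_k|)$ time. Steps 2–5 cost $O(|\cH_k|K\max\{n,2^\tau\tau,K\})$ by \Cref{t-main-X-reduction}. Step 4 additionally records the maximizers $\bx(\by)$ for the backtracking phase, which is free.

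Everything therefore reduces to bounding $|\cH_k|$ polynomially. We have $|\cH_{k+1}|\le|\cR(\cH_k,X_k)|+K\le|\cH_k|+K$. By induction $|\cH_k|\le|\cH|+nK$, which is polynomial since $K=O(n^\gb)$.

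The finalization step maximizes over at most $2^{\tau}$ vectors (since $|W_k|<\tau$), which is constant. The backtracking loop (Steps 7–9) costs $O(|\cH_\ell|)$ per step.

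\textbf{Size of the numbers.} For strong polynomiality I also need the bit sizes to stay polynomial. I would not reuse the crude bound in \Cref{t-main-X-reduction}, since it compounds badly over iterations. Instead I would use the following observation. Every $g_k$ is the function $\bx^{W_k}\mapsto\max f_{\cH,\bc}$, maximizing over the eliminated variables. So $|g_k|\le Z(f_{\cH,\bc})$ pointwise. The coefficients of its multilinear form are Möbius sums $c_S=\sum_{T\subseteq S}(-1)^{|S\setminus T|}g_k(\gk(T))$. One would like a bound $|c_S|\le 2^{|S|}Z$, but $2^{|S|}$ is not polynomial in bit length unless $|S|$ is small. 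Since $2^{|S|}Z$ has bit length $|S|+\log Z\le n+\log Z$, the log-size is polynomial anyway, so this suffices.

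Finally, all operations are additions, subtractions and comparisons of such integers, and there are polynomially many of them. This yields strong polynomiality.
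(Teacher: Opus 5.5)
Your proposal is correct, and its skeleton matches the paper's proof. Property (i) of \Cref{def-locally-bounded} supplies a nest in every round, property (ii) propagates local boundedness, the cost comes from \Cref{lem-nest-set-computation} and \Cref{t-main-X-reduction}, and correctness comes from \Cref{cor-SBVE}.

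Where you differ is in adding rigor at points the paper's three-line proof glosses over.

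\textbf{Support versus projection hypergraph.} The paper simply asserts that ``the projection'' stays locally bounded. This conflates the support $\cH(g_{k+1})$ with the projection hypergraph $\cR\cup\cT^u$. You rightly note that (ii) need not pass to subhypergraphs, since a subhypergraph can have extra nests about which (ii) says nothing. Your fix is to carry the formal hypergraphs $\cH_k$ with zero coefficients, which is exactly what the paper itself does later, in the proof of \Cref{t-gaacyclic}. An equivalent fix keeps SBVE literally as written on the supports and only picks $X_k$ as a nest of $\cH_k$. This works because $\cH(g_k)\subseteq\cH_k$ by \eqref{e-th4-consequence} together with monotonicity of $\cR$, $\cT$ and $(\cdot)^u$, so such an $X_k$ is also a nest of $\cH(g_k)$. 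Your explicit bound $|\cH_k|\le|\cH|+nK$ is also only implicit in the paper.

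\textbf{Bit sizes.} The paper relies on the recursion bound for $\gd$ in \Cref{t-main-X-reduction}. You instead use that $g_k$ is itself a projection of $f_{\cH,\bc}$, so $|g_k|\le Z(f_{\cH,\bc})$ pointwise, and M\"obius inversion then gives $|c_S|\le 2^{|S|}Z$. This is cleaner and does not depend on the number of rounds. However, your claim that the paper's bound ``compounds badly'' is overstated: each round adds only $O(n\log n+\log K)$ bits, so $n$ rounds still stay polynomial.

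\textbf{Intermediate numbers.} You did not explicitly bound $\gd$ and $\mu$, and for full rigor you should. The same M\"obius argument, applied to the $0/1$ function on the left of \eqref{e-unique-coefficients}, gives $|\gd(\bz^*,\bz)|\le 2^{n}$. Summing the coefficients of $g_k$ gives $|\mu(\by)|\le|\cH_k|\,2^{n}Z$.
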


\begin{proof}
	By property (ii) of \Cref{def-locally-bounded} the projection of a locally $(\tau,K)$-bounded BPO problem is also locally $(\tau,K)$-bounded. Thus we can proceed with \textsc{Procedure SBVE} with the restriction that in Step 1 in each iteration we choose a $(\tau,K)$-nest the existence of which is guaranteed by \Cref{def-locally-bounded} and our assumption that the input hypergraph $\cH$ is locally $(\tau,K)$-bounded. Then the claim follows by \Cref{t-main-X-reduction}, \Cref{cor-SBVE}, and \Cref{lem-nest-set-computation}.
\end{proof}

Let us note that we do not know how to test the conditions of \Cref{def-locally-bounded} in polynomial time. Furthermore, it may happen that we have different subsets $X_1\subseteq V$ and $X_2\subseteq V$, both satisfying property (i) of \Cref{def-locally-bounded}, and with $\cG=\cT(\cH,X_1)^u\cup\cR(\cH,X_1)$
\[
|\cT(\cG,X_2)^u| ~\gg~ K. 
\]
Still, under further special properties of the underlying hypergraph, local boundedness may become efficiently recognizable. 

\bigskip

In the next subsections we show multiple classes of hypergraphs that are $O(n^\gb)$-union bounded or locally $(\tau,O(n^\gb))$-bounded, for some constants $\tau$ and $\gb$. In fact these classes include all of the tractable special classes cited in the introduction, and some others.


\subsection{Bounded width hypergraphs}

Given a hypergraph $\cH\subseteq 2^V$ we call a subfamily $\cC\subseteq \cH$ a \emph{chain} if for all $H,H'\in\cC$ we have either $H\subseteq H'$ or $H'\subseteq H$ (such families are also called nested). A subfamily $\cA\subseteq \cH$ is called an \emph{antichain} if for all $H,H'\in \cA$, $H\neq H'$ we have both $H\setminus H'\neq \emptyset$ and $H'\setminus H\neq \emptyset$ (such families are also called Sperner systems in the literature). The celebrated theorem of Dilworth \cite{Dilworth-1950} claims that the minimum number of chains in a chain cover of $\cH$ is equal to the maximum number of hyperedges in an antichain of $\cH$, i.e., 
\[
\max 
\left\{|\cA| \left| 
\begin{array}{c}
	\cA\subseteq \cH \\
	 \cA \text{ is an antichain}
\end{array}
\right.\right\}
~=~ 
\min 
\left\{\ell \left| 
\begin{array}{c}
\exists \text{ chains } \cC_j\subseteq \cH, ~j\in [\ell] \\
\text{ s.t. } \cH=\cC_1\cup \dots \cup \cC_\ell
\end{array}
\right.\right\}.
\] 
Let us denote by $\go(\cH)$ this common value, called the \emph{width} of $\cH$. 

\begin{theorem}\label{t-antichain}
		Any hypergraph $\cH\subseteq 2^V$ is $(|V|+1)^{\go(\cH)}$-union bounded. Consequently, the BPO problem for a hypergraph with bounded width is solvable in strongly polynomial time by \textsc{Procedure SBVE} using $\tau=1$ or by the direct approach of \Cref{cor-union-bounded-one-step}.
\end{theorem}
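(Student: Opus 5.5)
By Dilworth's theorem, cover $\cH$ by $\ell=\go(\cH)$ chains $\cC_1,\dots,\cC_\ell$. The plan is to map each union $U\in\cH^u$ to a tuple that determines it and ranges over a set of size at most $(|V|+1)^{\ell}$.

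Fix $\cH'\subseteq\cH$ and let $U=\bigcup_{H\in\cH'}H$. Each piece $\cH'\cap\cC_j$ is a subfamily of a chain. Hence it is itself a chain, and its union is either empty or equal to its largest member. Therefore $U=U_1\cup\dots\cup U_\ell$, where each $U_j$ is either $\emptyset$ or a member of $\cC_j$. A chain of distinct subsets of $V$ has at most $|V|+1$ members, because their cardinalities are pairwise distinct and lie in $\{0,\dots,|V|\}$. So each $U_j$ ranges over $\cC_j\cup\{\emptyset\}$, a set of at most $|V|+2$ elements. This gives the bound $(|V|+2)^\ell$.

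To sharpen it to $(|V|+1)^\ell$, I count more carefully. If $\emptyset\in\cC_j$, then the option ``empty'' is already a member of the chain, so there are at most $|V|+1$ choices. If $\emptyset\notin\cC_j$, the cardinalities of the members of $\cC_j$ lie in $\{1,\dots,|V|\}$, so $|\cC_j|\leq|V|$, and adding the empty option again gives at most $|V|+1$ choices. Since $U$ is determined by the tuple $(U_1,\dots,U_\ell)$, we get $|\cH^u|\leq(|V|+1)^{\go(\cH)}$. Hence $\cH$ is $(|V|+1)^{\go(\cH)}$-union bounded in the sense of \Cref{def-union-bounded}. If one needs the chain cover explicitly (for instance, for the algorithm), it can be computed via bipartite matching, but the counting argument only uses its existence.

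For the algorithmic consequence, suppose $\go(\cH)$ is bounded by a constant $\gb$. Then $K=(n+1)^\gb$ is polynomial, and \Cref{cor-union-bounded-one-step} solves the problem in $K\cdot O(|V||\cH|)$ time. Alternatively, \Cref{cor-union-bounded} with $\tau=1$ applies. It needs the projections to remain $K$-union bounded, and \Cref{t-main-union-bounded} guarantees this. Every singleton is then a $(1,K)$-nest set, and SBVE runs in strongly polynomial time. The main point to get right is the boundary count at $|V|+1$ rather than $|V|+2$, handled by the case split on whether $\emptyset\in\cC_j$ as above; the rest is routine.
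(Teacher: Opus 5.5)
Your proof is correct and follows the paper's argument. Both take a Dilworth chain cover with $\go(\cH)$ chains, replace each chain's share of $\cH'$ by its largest member (or $\emptyset$), count at most $|V|+1$ options per chain, and then invoke \Cref{t-main-union-bounded} and \Cref{cor-union-bounded-one-step} for the algorithmic part. The only difference is cosmetic. The paper gets the count $|V|+1$ (rather than $|V|+2$) by assuming w.l.o.g.\ that $\emptyset\in\cH$ and extending each chain to a maximal one, so that every chain contains $\emptyset$. Your case split on whether $\emptyset\in\cC_j$ reaches the same bound directly, and spares you checking that adding $\emptyset$ does not change the width.
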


\begin{proof}
	Let us observe first that $\emptyset\in\cH$ can be assumed w.l.o.g., since adding the empty set to a hypergraph is not changing the unions obtainable from that hypergraph. Let us next consider a chain cover $\cH=\cC_1\cup \dots \cup \cC_\ell$ with $\ell=\go(\cH)$. By the above cited theorem of Dilworth such a chain cover of $\cH$ exists. We can also assume w.l.o.g. that each chain $\cH_j$, $j\in [\ell]$ is a maximal chain within $\cH$. This is because extending a chain to a maximal one (if needed) does not to change the fact that these chains form a chain cover of $\cH$. 
	
	Let us next observe that to an arbitrary subfamily $\cF\subseteq \cH$ we can associate hyperedges
	$ H_j\in\cC_j$, $j\in [\ell]$ by defining $H_j$ as the maximal set in $\cF\cap \cC_j$ if this intersection is not empty, and setting $H_j=\emptyset$ if $\cF\cap \cC_j=\emptyset$. Note that since $\cC_j$ is a chain, $H_j$ is uniquely defined, for each $j\in [\ell]$, and furthermore we have the equality
	\[
	\bigcup_{H\in \cF} H ~=~ \bigcup_{j\in [\ell]} H_j.
	\]
	Since a maximal chain has at most $|V|+1$ sets, the right hand side above can define at most $(|V|+1)^\ell$ different sets, proving the claimed union boundedness
	\[
	|\cH^u| ~\leq~ (|V|+1)^\ell.
	\]
	Note that by \Cref{t-main-union-bounded} an arbitrary projection of a $(|V|+1)^\ell$-union bounded hypergraph is again $(|V|+1)^\ell$-union bounded. Thus we can run \textsc{Procedure SBVE} with $\tau=1$, eliminating variables in an arbitrary order. 
	
	Consequently, the claimed strongly polynomial performance of \textsc{Procedure SBVE} follows by \Cref{t-main-X-reduction} for any constants $\gk$ and hypergraphs $\cH\subseteq 2^V$ with $\go(\cH)\leq \gk$. 
\end{proof}

\subsection{Bounded tree-width hypergraphs}

In this subsection we consider simple undirected graphs and use standard graph theory notions and definitions, see e.g., \cite{Bondy-Murty-1976}. We say that a vertex $v$ of a graph $G=(V,E)$ is \emph{simplicial} if its neighbors $N(v)=\{w\in V\mid (v,w)\in E\}$ induce a complete subgraph of $G$. For $\gk\in \ZZ_+$ we say that for a graph $G=(V,E)$ an ordering $V=\{v_1,v_2,\dots ,v_n\}$ is a \emph{$\gk$-perfect elimination order} if for every $v_j\in V$ vertex $v_j$ is simplicial in the subgraph induced by $W_j=\{v_j,v_{j+1},\dots,v_n\}$ and $|N(v_j)\cap W_j|\leq \gk$. A graph $G$ is called a \emph{$\gk$-tree} if it has a $\gk$-perfect elimination order. A graph $G=(V,E)$ is called a \emph{partial $\gk$-tree} if there exists a $\gk$-tree $G^*=(V,F)$ such that $E\subseteq F$. The \emph{tree-width} $tw(G)$ of a graph $G$ is the smallest value of $\gk$ such that $G$ is a partial $\gk$-tree. While determining $tw(G)$ is NP-hard in general (see e.g., \cite{Arnborg-1987}), for a constant $\gk$ the relation $tw(G)\leq \gk$ can be recognized and if the relation holds the corresponding $\gk$-tree and $\gk$-perfect elimination order can be constructed in polynomial time, see e.g., \cite{Arnborg-1985,Bodlaender-1996,Robertson-1986}.

To a hypergraph $\cH\subseteq 2^V$ we associate its \emph{co-occurrence graph} $G_\cH=(V,E)$ where 
\[
E ~=~ \{(i,j)\mid i,j\in V,~ \exists H\in\cH \text{ s.t. } i,j\in H \}
\]
and define its \emph{tree-width} $tw(\cH)$ as the tree-width of its associated co-occurrence graph $tw(\cH)=tw(G_\cH)$.

\begin{theorem}\label{cor-limited-tree-width}
	For any constant $\gk\in \ZZ_+$ a hypergraph $\cH\subseteq 2^V$ with $tw(\cH)\leq \gk$ is locally $(1,2^\gk)$-bounded. Consequently, the BPO problem for hypergraphs with bounded tree-width is solvable in strongly polynomial time by \textsc{Procedure SBVE} using $\tau=1$.
\end{theorem}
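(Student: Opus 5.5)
The plan is to fix a $\gk$-tree $G^*=(V,F)$ with $E(G_\cH)\subseteq F$ and a $\gk$-perfect elimination order $v_1,\dots,v_n$ of $G^*$. Both can be computed in polynomial time for constant $\gk$, by the results cited before the theorem. We then argue by induction on $|V|$ that eliminating $X=\{v_1\}$ is a $(1,2^\gk)$-nest step and that the resulting projection again has tree-width at most $\gk$, witnessed by the $\gk$-tree $G^*-v_1$ and the order $v_2,\dots,v_n$.

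\textbf{Step 1 (nest property).} Let $v=v_1$. Every $H\in\cQ(\cH,\{v\})$ contains $v$, so every $T=H\setminus\{v\}\in\cT(\cH,\{v\})$ consists of neighbours of $v$ in $G_\cH$. Hence $T\subseteq N_{G^*}(v)$, and $|N_{G^*}(v)|\leq\gk$ because $v$ is first in the $\gk$-perfect elimination order. Every union of such sets is again a subset of $N_{G^*}(v)$, so $\cT(\cH,\{v\})^u\subseteq 2^{N_{G^*}(v)}$. This gives $|\cT(\cH,\{v\})^u|\leq 2^\gk$, so $\{v\}$ is a $(1,2^\gk)$-nest and condition (i) of \Cref{def-locally-bounded} holds.

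\textbf{Step 2 (the projection stays in the class).} Consider $\cG=\cR(\cH,\{v\})\cup\cT(\cH,\{v\})^u\subseteq 2^{V\setminus\{v\}}$.
\begin{itemize}
\item A pair covered by a set of $\cR(\cH,\{v\})$ is an edge of $G_\cH-v$, hence of $G^*-v$.
\item A pair covered by a set of $\cT(\cH,\{v\})^u$ lies inside $N_{G^*}(v)$. This set is a clique of $G^*$ because $v$ is simplicial there, so the pair is again an edge of $G^*-v$.
\end{itemize}
Therefore $G_\cG\subseteq G^*-v$. Deleting the first vertex of a $\gk$-perfect elimination order leaves a $\gk$-tree with order $v_2,\dots,v_n$, so $tw(\cG)\leq\gk$ and the induction applies. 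By \eqref{e-th4-consequence} the same containment holds for the actual hypergraph $\cH(f^{V\setminus\{v\}}_{\cH,\bc})$, so the procedure really stays inside the class. Combined with \Cref{cor-locally-bounded}, \Cref{cor-SBVE} and \Cref{t-main-X-reduction}, using $\tau=1$ and $K=2^\gk$ (a constant), this yields the strongly polynomial bound: each step costs $O(|\cH|\,2^\gk\max\{|V|,2^\gk\})$, there are at most $n$ steps, and $|\cH(g_k)|$ stays bounded by $|\cH|+n2^\gk$.

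\textbf{Main obstacle.} Condition (ii) of \Cref{def-locally-bounded} quantifies over \emph{every} $(1,2^\gk)$-nest, not just the simplicial vertex $v_1$. An arbitrary vertex $u$ with $|\cT(\cH,\{u\})^u|\leq 2^\gk$ may have a large neighbourhood covered by few sets of $\cT$. Eliminating it turns $N_{G_\cH}(u)$ (the largest union) into a clique, and this fill-in could in principle raise the tree-width.

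I would first try to show that smallness of $\cT^u$ controls this fill-in. The fill-in is a union of at most $\log_2$-many "independent" pieces, each already a clique of $G_\cH$. One would then hope that a tree decomposition of $G_\cH-u$ can be patched, keeping the width bounded by a function of $\gk$ (possibly not $\gk$ itself).

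If this fails, the fallback is to prove the statement in the operational sense actually used by \Cref{cor-locally-bounded}. That is, restrict condition (ii) to the nests chosen along a $\gk$-perfect elimination order, which by Steps 1 and 2 are always available and preserve the invariant $tw\leq\gk$. Step 1 of \textsc{Procedure SBVE} then chooses the next vertex of the maintained elimination order, and the polynomial running time follows as above.
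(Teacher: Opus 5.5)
Your Steps 1 and 2 are essentially the paper's proof. The paper also takes the first vertex $v$ of a $\gk$-perfect elimination order of a $\gk$-tree $G^*$ containing $G_\cH$, notes $\cT(\cH,\{v\})^u\subseteq 2^{N(v)}$, and shows the co-occurrence graph of $\cR(\cH,\{v\})\cup\cT(\cH,\{v\})^u$ lies in $G^*-v$. It then runs \textsc{Procedure SBVE} with $X_k=\{v_k\}$.

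The obstacle you flag is real, and the paper does not address it. Its proof checks condition (ii) of \Cref{def-locally-bounded} only for the nest $\{v_1\}$, so what it actually establishes is exactly your fallback. That fallback is all the algorithmic conclusion uses.

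Your first repair idea will not close the gap, for two reasons:
\begin{itemize}
\item A width bound $f(\gk)>\gk$ after the elimination would only certify $(1,2^{f(\gk)})$-nests, not $(1,2^{\gk})$-nests.
\item The invariant $tw\leq\gk$ genuinely fails for non-simplicial nests. Take $\gk=2$ and $\cH=\{\{u,a,b\},\{u,c,d\}\}$. Then $\{u\}$ is a $(1,4)$-nest, since $\cT(\cH,\{u\})^u=\{\emptyset,\{a,b\},\{c,d\},\{a,b,c,d\}\}$. Yet the projection contains $\{a,b,c,d\}$ and has tree-width $3$.
\end{itemize}
So any proof of the literal condition (ii) would need an invariant other than tree-width.

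For the write-up, state the fallback explicitly as what you prove. Derive the running time directly from \Cref{t-main-X-reduction} and \Cref{cor-SBVE}, as the paper does, rather than via \Cref{cor-locally-bounded}, whose hypothesis is the unrestricted notion you have not established.
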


\begin{proof}
	Let us first construct the co-occurrence graph $G_\cH=(V,E)$ and the $\gk$-tree $G^*=(V,F)$ for which $E\subseteq F$. Assume that we relabel the elements of $V$ according to the $\gk$-perfect elimination order of $G^*$ and that $v\in V$ is the first vertex in this order. Then, by the definition of the $\gk$-perfect elimination order, the neighborhood $N(v)$ of $v$ in $G^*$ is a clique with $|N(v)|\leq \gk$. 
	
	Let us then consider $X=\{v\}$ and the projection of $\cH$ on $V\setminus X$. Since $E\subseteq F$, the neighbors of $v$ in $G_\cH$ are all belong to $N(v)$. Consequently, by the definition of the co-occurrence graph we have $\cT(\cH,X)\subseteq 2^{N(v)}$, implying $\cT(\cH,X)^u\subseteq 2^{N(v)}$. These imply on the one hand that $|\cT(\cH,X)^u|\leq 2^\gk$ and on the other hand that for the co-occurrence graph $G_{\cT(\cH,X)^u\cup \cR(\cH,X)}=G'=(V\setminus X, E')$ we also have $E'\subseteq F$. Thus, $G'$ is also a partial $\gk$-tree with respect to the $\gk$-tree that is the subgraph of $G^*$ induced by $V\setminus X$.
	
	Consequently, we can execute \textsc{Procedure SBVE} with $\tau=1$ and choose $X_k=\{k\}$ in Step 1 of iteration $k$ guaranteeing by the above analysis that $|\YY_{\cT_k}|\leq 2^\gk$. Since $\gk$ is a constant, the procedure will provide a solution to this BPO problem  in strongly polynomial time according to \Cref{t-main-X-reduction} and \Cref{cor-SBVE}. 
\end{proof}

For a more precise complexity of the above result note that we can compute the co-occurrence graph of $\cH$ in $O(|V|^2|\cH|)$ time, and test $tw(G_\cH)\leq \gk$ in $O(2^{O(\gk^3)}|V|)=O(|V|)$ time by the algorithm of \cite{Bodlaender-1996}.  \textsc{Procedure SBVE} takes $|V|$ iterations, and in each iteration we have $|\cT_k^u|\leq 2^\gk$ implying by \Cref{t-main-X-reduction} that the procedure will terminate in $O(|V|^2|\cH|)$ time. 

\bigskip


This family of hypergraphs was recently considered by \cite{Clausen-Crama-Lusby-Heck-Ropke-2024}. For a hypergraph $\cH\subseteq 2^V$ we define its \emph{reach} as $\gr(\cH)=\max_{H\in\cH}\max _{i,j\in H} |i-j|$. The authors of \cite{Clausen-Crama-Lusby-Heck-Ropke-2024} introduced a specialized dynamic programming procedure for this special class, and showed strongly polynomial tractability of problem BPO for this case. Note that $tw(\cH)\leq \gr(\cH)$ as observed in \cite{Clausen-Crama-Lusby-Heck-Ropke-2024}. In fact, for $\gk=\gr(\cH)$ the graph $G^*=(V,F)$ with $F=\{(i,j)\mid i,j\in V,~|i-j|\leq \gk\}$ is a $\gk$-tree, such that for $G_\cH=(V,E)$ we have $E\subseteq F$, and $V=\{1,2,\dots,n\}$ is a $\gk$-perfect elimination order.
Consequently, the results of the previous subsection provide strongly polynomial time solution for problem BPO for hypergraphs with bounded reach, by eliminating variables from $V$ one by one, in the order indicated above. Thus, for hypergraphs with limited reach we do not need to spend time to compute the actual tree-width and the corresponding perfect elimination order. 

\bigskip

\subsection{$\gb$-acyclic hypergraphs}

Let us recall (see \cite{Fagin-1983b}) that a hypergraph $\cH\subseteq 2^V$ is $\beta$-acyclic, if it does not contain hyperedges $H_i\in\cH$ and vertices $v_i\in V$ for $i=1,...,k$ for some $k\geq 3$ such that $v_i\in H_{i-1}\cap H_i$ and $v_i\not\in H_j$ if $j\not\in \{i-1,i\}$ for all $i,j=1,...,k$ (where $1=k+1$). Such hypergraphs were called \emph{totally balanced} in \cite{Lovasz-1979} and have interesting properties, see e.g., \cite{Anstee-1983}.

A vertex $v\in V$ is called a \emph{nest point} of $\cH$ if $\cP(\cH,\{v\})\cup \cQ(\cH,\{v\})$ is a \emph{nested family}, that is if for any pair $H,H'\in \cP(\cH,\{v\})\cup \cQ(\cH,\{v\})$ we have either $H\subseteq H'$ or $H'\subseteq H$. Note that this condition is equivalent with the nestedness of $\cT(\cH,\{v\})$. 

It was shown in \cite{Brouwer-Kolen-1980} (see also \cite{Brault-Baron-2017,Duris-2012,Jegou-Ndiaye-2009}) that a hypergraphs $\cH\subseteq 2^V$ is $\gb$-acyclic if and only if for every subset $X\subseteq V$ the subhypergraph $\cT(\cH,X)\cup \cR(\cH,X)$ has a nest point. Since for a nested hypergraph $\cN$ we have both $\cN^u=\cN$ and $|\cN|\leq |V|$, the following claim can be shown easily.

\begin{theorem}\label{cor-beta-acyclic}
A $\gb$-acyclic hypergraph $\cH\subseteq 2^V$ is locally $(1,|V|)$-bounded. Consequently, the BPO problem for $\gb$-acyclic hypergraphs is solved in strongly polynomial time by \textsc{Procedure SBVE} using $\tau=1$.
\end{theorem}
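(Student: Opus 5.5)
The plan is to verify the two conditions of \Cref{def-locally-bounded} with $\tau=1$ and $K=|V|$ (we may take $K=|V|+1$ if the empty set is counted; this changes nothing asymptotically). The key structural fact is that $\gb$-acyclicity is preserved by the projection.

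\textbf{Step 1 (nest points are nests).} Let $\cH$ be $\gb$-acyclic. Applying the Brouwer--Kolen characterization with $X=\emptyset$, $\cH$ itself has a nest point $v$. Then $\cT(\cH,\{v\})$ is nested, hence $\cT(\cH,\{v\})^u=\cT(\cH,\{v\})$. A chain of distinct subsets of $V\setminus\{v\}$ has at most $|V|$ members, so $\{v\}$ is a $(1,|V|)$-nest, which gives (i). Conversely, assume $\{w\}$ is an arbitrary $(1,|V|)$-nest, as (ii) requires. Since $\cT(\cH,\{w\})^u$ need not equal $\cT(\cH,\{w\})$ in that case, we must show that the projection stays $\gb$-acyclic for any single vertex, not only for nest points.

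\textbf{Step 2 (projection preserves $\gb$-acyclicity).} I would show that $\cG=\cR(\cH,\{w\})\cup\cT(\cH,\{w\})^u$ is $\gb$-acyclic. Every member of $\cR\cup\cT$ is $H\setminus\{w\}$ for some $H\in\cH$. Removing a vertex from all edges preserves $\gb$-acyclicity: a $\gb$-cycle in the trace gives a $\gb$-cycle in $\cH$ because its vertices avoid $w$. The harder part is adding unions of members of $\cT$. All such members contain $w$ in their preimage, so a union $U=\bigcup_i(H_i\setminus\{w\})$ corresponds to $\bigcup_i H_i$, and all the $H_i$ pairwise share $w$.

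The cleanest route is probably the equivalent characterization of $\gb$-acyclicity as a nest-point elimination order, i.e.\ every subhypergraph (and every trace) has a nest point. One would argue inductively that, with $W=V\setminus\{w\}$ and any $S\subseteq W$, the trace of $\cG$ on $S$ has a nest point. Take the trace of $\cH$ on $S\cup\{w\}$; it has a nest point, and by $\gb$-acyclicity it actually has two non-adjacent ones, or one different from $w$ (the standard ``two nest points'' lemma for $\gb$-acyclic hypergraphs). Pick a nest point $u\neq w$. The edges of $\cG$ containing $u$ are then unions of edges of $\cH$ containing $u$, with $w$ removed. Those containing $u$ form a chain $C_1\subseteq\dots\subseteq C_m$ in the trace. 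A union of members of $\cT$ contains $u$ iff one of its constituents does, and I expect that such unions again form a chain. This needs the additional observation that the edges through $w$, traced, interact with the chain monotonically. This is the main obstacle, and if the direct argument fails I would instead invoke the Del Pia--Di Gregorio observation that eliminating a vertex adds only edges of the form ``union of edges through it'' and preserves $\gb$-acyclicity.

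\textbf{Step 3 (conclude).} Given Step 2, local $(1,|V|)$-boundedness follows by induction on $|V|$: (i) holds by Step 1, and (ii) holds because each projection is again $\gb$-acyclic on fewer vertices. Then \Cref{cor-locally-bounded} with $\tau=1$ and $\gb=1$ gives strongly polynomial solvability, with the nest points found by \Cref{lem-nest-set-computation}.
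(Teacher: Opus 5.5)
Your Step~2 cannot be carried out, because the statement it aims at is false: eliminating a $(1,|V|)$-nest $\{w\}$ with $w$ not a nest point can destroy $\gb$-acyclicity.

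Take $V=\{w,a,b,c,e_1,e_2,e_3,e_4\}$ and $\cH=\{\{w,a\},\{w,b\},\{w,c\},\{e_1\},\dots,\{e_4\}\}$. This hypergraph is $\gb$-acyclic: a $\gb$-cycle would have to use the three edges through $w$, but any two of them meet only in $w$, which lies in the third. We have $\cT(\cH,\{w\})=\{\{a\},\{b\},\{c\}\}$, so $\cT(\cH,\{w\})^u=2^{\{a,b,c\}}$ has $8=|V|$ members, and $\{w\}$ is a $(1,|V|)$-nest. Yet the projection $\cR(\cH,\{w\})\cup\cT(\cH,\{w\})^u$ contains the $\gb$-cycle $\{a,b\},\{b,c\},\{a,c\}$. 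Its trace on $\{a,b,c\}$ is all of $2^{\{a,b,c\}}$, which has no nest point at all.

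This is exactly where your sketch breaks. The vertex $a$ is a nest point of $\cH$ other than $w$, but the edges of the projection containing $a$ are not unions of edges of $\cH$ through $a$. A union of traces contains $a$ as soon as one constituent does, while the other constituents (here $\{b\}$ and $\{c\}$) are arbitrary traces of edges through $w$. So these edges need not form a chain. The Del Pia--Di Gregorio fallback does not help either: that preservation result is for nest points, where $\cT$ is a chain and taking unions adds nothing new.

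Separately, your induction in Step~3 would need care. Local $(\tau,K)$-boundedness is not monotone in $K$, so knowing that the projection is locally $(1,|V|-1)$-bounded does not give local $(1,|V|)$-boundedness, which is what \Cref{def-locally-bounded}(ii) demands.

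The paper avoids all of this by working only with nest points. For a nest point $v$, $\cT(\cH,\{v\})$ is a chain, so $\cT(\cH,\{v\})^u$ adds only $\emptyset$ and has at most $|V|$ members. The projection is then, up to $\emptyset$, the trace $\{H\setminus\{v\}\mid H\in\cH\}$, which is $\gb$-acyclic by the Brouwer--Kolen characterization. Since $\cH(g_k)$ is contained in this projection, it is again $\gb$-acyclic. Hence Step~1 of \textsc{Procedure SBVE} can always pick a nest point, and \Cref{t-main-X-reduction} together with \Cref{cor-SBVE} give correctness and the running time.

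You were right to notice that \Cref{def-locally-bounded}(ii) literally quantifies over every $(1,|V|)$-nest, whereas the paper's argument only checks it along nest points. However, the algorithmic conclusion needs only the nest-point recursion, so you should conclude directly rather than through \Cref{cor-locally-bounded}. As the example shows, any argument covering arbitrary nests cannot go through $\gb$-acyclicity of the projection.
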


\begin{proof}
	Note first that if we choose $X=\{v\}$ for a nest point $v\in V$ of $\cH$ then $\cT(\cH,X)$ is a nested family, implying $\cT(\cH,X)^u=\cT(\cH,X)$ and $|\cT(\cH,X)|\leq |V|$. Furthermore, by the characterization of \cite{Brouwer-Kolen-1980} the projected family $\cT(\cH,X)\cup \cR(\cH,X)$ is again $\gb$-acyclic. Thus \Cref{def-locally-bounded} holds with $(\tau,K)=(1,|V|)$. Furthermore, in iteration $k$ in Step 1 of \textsc{Procedure SBVE} we can choose the set $X_k=\{v\}$ for a nest point $v\in W_k$ of the $\gb$-acyclic hypergraph $\cH(g_k)$. Thus the claim follows by \Cref{t-main-X-reduction} and \Cref{cor-SBVE}.
\end{proof}

Note that the complexity results claimed in \Cref{t-main-X-reduction} are based on the general projection setting and yield an $O(|\cH||V|^3)$ complexity for $\gb$-acyclic hypergraphs.  However, in this special case we can compute the coefficients on the right hand side of \Cref{t-main-X-reduction} more efficiently, utilizing the nested structure of $\cT$, potentially improving total complexity to $O(|V|^4)$ after an initial one time $O(|V||\cH|)$ preprocessing.

\subsection{Bounded rank $\ga$-acyclic hypergraphs}

Let us recall (see \cite{Fagin-1983b}) that a hypergraph $\cH\subseteq 2^V$ is $\ga$-acyclic
if its hyperedges can be labeled $\cH=\{H_0,H_1,\dots,H_m\}$ such that for all $j=1,\dots m$ there exists an index $\ell=\ell(j)<j$ such that
\begin{equation}\label{e-running-intersection}
	H_j\cap \left(\bigcup_{i<j} H_i\right) ~\subseteq~ H_\ell.
\end{equation}
Note that we can assume w.l.o.g. that $H_0=\emptyset$, since we can always add the emptyset to an $\ga$-acyclic hypergraph without violating $\ga$-acyclicity. If for an index $j$ there exist more than one $\ell$ that satisfies \eqref{e-running-intersection}, then we denote by $\ell(j)$ the smallest such index. 

The above property is called the \emph{running intersection property} and it characterizes the large family of $\ga$-acyclic hypergraphs \cite{Beeri-Fagin-Maier-Yannakakis-1983}. For instance, any hypergraph becomes $\ga$-acyclic, if we add to it $V$ as a hyperedge. Consequently, the BPO problem remains NP-hard, when restricted to $\ga$-acyclic hypergraphs (see also \cite{DelPia-DiGregorio-2023}). Let us further add that $\ga$-acyclicity can be recognized in $O(\sum_{H\in\cH} |H|)$ time (see e.g., \cite{graham1979universal-1979,yu1979algorithm-1979}) and an ordering of the hyperedges of $\cH$ satisfying the running intersection property can also be computed in the same time (see \cite{maier1983theory}). 

The \emph{rank} of a hypergraph $\cH\subseteq 2^V$ is defined as 
\begin{equation}\label{e-rank-of-cH}
	r(\cH) ~=~ \max_{H\in \cH} |H|. 
\end{equation}
We say that $\cH$ is a \emph{bounded rank hypergraph} if $r(\cH)\leq d=O(\log poly (|V|,|E|))$.
Using an enhanced linearization method it was shown recently in \cite{DelPia-Khajavirad-2025} that BPO problems with bounded rank $\ga$-acyclic hypergraphs are polynomially solvable. 

We show here that such hypergraphs are also locally union bounded and \textsc{Procedure SBVE} can be shown to solve such BPO instances in strongly polynomial time. To be able to show these claims, we need a few technical observations. 

Assume for the rest of this subsection that $\cH=\{H_0=\emptyset, H_1,\dots , H_m\}\subseteq 2^V$ is an $\ga$-acyclic hypergraph satisfying the running intersection property \eqref{e-running-intersection}
and that $r(\cH)\leq d=O(\log poly (|V|,|E|))$. We set $J=\{0,1,\dots, m\}$ for this subsection.  Furthermore, we can assume w.l.o.g. that $V=\bigcup_{H\in\cH} H$, since otherwise we could just redefine $V$.

For an index $v\in V$ let us define 
\[
i(v) ~=~ \min_{H_j\ni v} j,
\]
set $I=\{i(v)\mid v\in V\}\subseteq J$, and define 
\[
S_i ~=~ \{v\in V\mid i(v)=i\} ~~\text{ for all }~~ i\in I.
\]
Note that by our assumptions, we have $1\leq i(v)\leq m$ for all $v\in V$. 

\begin{lemma}\label{lem-gaacyclic-Si}
	The sets $S_i$, $i\in I$ form a partition of $V$, and for all $i\in I$ we have $S_i\subseteq H_i$ and hence $|S_i|\leq d$. 
\end{lemma}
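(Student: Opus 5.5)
The statement follows directly from the definition of $i(\cdot)$ as a well-defined function on $V$; I would verify the three claims (partition, containment, size bound) in that order.

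\emph{Partition.} By the standing assumption $V=\bigcup_{H\in\cH}H$, every $v\in V$ lies in at least one hyperedge $H_j$. Since $H_0=\emptyset$, any such $j$ satisfies $j\geq 1$. Hence the minimum defining $i(v)$ is taken over a nonempty finite index set and is a well-defined element of $\{1,\dots,m\}$. The sets $S_i=\{v\mid i(v)=i\}$, $i\in I$, are then exactly the fibers of the map $v\mapsto i(v)$. The fibers of any function cover the domain and are pairwise disjoint. Each fiber indexed by $i\in I$ is nonempty, because $I$ is defined as the image of $i(\cdot)$. Thus $\{S_i\mid i\in I\}$ is a partition of $V$.

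\emph{Containment.} If $v\in S_i$, then $i(v)=i$. The minimum defining $i(v)$ is attained at an index $j$ with $v\in H_j$, so $v\in H_{i(v)}=H_i$. This gives $S_i\subseteq H_i$.

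\emph{Size bound.} From $S_i\subseteq H_i$ we get $|S_i|\leq |H_i|\leq r(\cH)\leq d$ by the definition \eqref{e-rank-of-cH} of the rank.

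There is no real obstacle here. The only point to state explicitly is that the minimum in the definition of $i(v)$ is over a nonempty set. This uses the w.l.o.g. assumption $V=\bigcup_{H\in\cH}H$ together with $H_0=\emptyset$, which is what makes $i(v)\geq 1$. The running intersection property \eqref{e-running-intersection} is not needed for this lemma. It will matter only for the later structural claims about how the $S_i$ interact with $\cT(\cH,X)$.
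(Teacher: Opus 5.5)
Your proof is correct and takes the same route as the paper. The paper says only that the lemma follows immediately from the definitions and from $r(\cH)\leq d$; you have written out those details: $i(v)$ is well defined because $V=\bigcup_{H\in\cH}H$ and $H_0=\emptyset$, the $S_i$ are the fibers of $v\mapsto i(v)$, and the minimum defining $i(v)$ is attained, which gives $S_i\subseteq H_i$.
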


\begin{proof}
	Follows immediately by the above definitions and the assumption that $r(\cH)\leq d$.
\end{proof}

\begin{lemma}\label{lem-gaacyclic-unions}
	For all $k\in J$ we have 
		\[
		\bigcup_{j\in J\atop j<k}H_j ~=~ \bigcup_{i\in I\atop i<k} S_i.
		\]
\end{lemma}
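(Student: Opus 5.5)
We show both inclusions by a direct element-chasing argument from the definitions of $i(v)$ and $S_i$; the running intersection property is not needed.

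For the inclusion $\subseteq$, take $v\in H_j$ with $j<k$. By the definition $i(v)=\min_{H_{j'}\ni v} j'$ we have $i(v)\leq j<k$. Moreover $v\in S_{i(v)}$ and $i(v)\in I$ by the definition of $I$. Hence $v$ lies in $\bigcup_{i\in I,\, i<k} S_i$.

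For the inclusion $\supseteq$, take $v\in S_i$ with $i\in I$ and $i<k$. Then $i(v)=i$, and since the minimum defining $i(v)$ is attained, we get $v\in H_{i(v)}=H_i$. This also follows from \Cref{lem-gaacyclic-Si}, which gives $S_i\subseteq H_i$. Since $i<k$ and $i\in I\subseteq J$, $v$ lies in $\bigcup_{j\in J,\, j<k} H_j$.

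The only point needing care is that $i(v)$ is well defined, that is, every $v\in V$ lies in some $H_j$. This holds by the standing assumption $V=\bigcup_{H\in\cH} H$. The boundary case $k=0$ gives two empty unions, consistent with $H_0=\emptyset$. There is no real obstacle.
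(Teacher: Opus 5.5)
Your proof is correct and matches the paper's argument essentially verbatim. Both inclusions are shown by chasing elements through the definitions of $i(v)$ and $S_i$: one direction uses $S_i\subseteq H_i$ from \Cref{lem-gaacyclic-Si}, and the other uses $v\in H_j\Rightarrow i(v)\leq j<k$.
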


\begin{proof}
	Let us denote the left hand side by $LH$ and the right hand side by $RH$.
	Observe first that for all $v\in RH$ we have $v\in S_i$ for some $i<k$ and thus by \Cref{lem-gaacyclic-Si} $v\in H_i$, proving $RH\subseteq LH$.
	Observe next that for all $v\in LH$ we have $v\in H_i$ for some $i<k$, implying $i(v)\leq i<k$ and thus $v\in S_{i(v)}\subseteq RH$, implying $LH\subseteq RH$.
\end{proof}

To every hyperedge $H\in\cH$ let us associate 
\[
i(H) ~=~ \max_{v\in H} i(v),
\]
and for every $i\in I$ define $J(i)=\{j\in [m]\mid i(H_j)=i\}$. 

\begin{lemma}\label{lem-gaacyclic-J(i)}
	For every $i\in I$ and $j\in J(i)$ we have $i\in J(i)$ and $i\leq j$. 
\end{lemma}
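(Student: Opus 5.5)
The plan is to argue directly from the definitions of $i(v)$, $i(H)$ and $J(i)$. The only fact needed is a monotonicity observation: if $v\in H_j$, then $i(v)=\min_{H_{j'}\ni v} j'\leq j$, since $H_j$ is one of the hyperedges in that minimum. Taking the maximum over $v\in H_j$ gives
\[
i(H_j)~=~\max_{v\in H_j} i(v)~\leq~ j
\]
for every nonempty hyperedge $H_j$. Since $H_0=\emptyset$ is the only empty hyperedge and $J(i)\subseteq[m]$, this covers every $H_j$ with $j\in J(i)$.

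Both claims then follow. For the inequality $i\leq j$: take $j\in J(i)$, so $i(H_j)=i$ by definition. The observation gives $i=i(H_j)\leq j$.

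For the membership $i\in J(i)$, which does not depend on $j$: since $i\in I$, there is a vertex $v$ with $i(v)=i$. By \Cref{lem-gaacyclic-Si} (or directly, because the minimum defining $i(v)$ is attained at $H_i$) we have $v\in S_i\subseteq H_i$. Hence $H_i\neq\emptyset$, $i\geq 1$, and $i(H_i)\geq i(v)=i$. The observation applied to $H_i$ gives $i(H_i)\leq i$. Therefore $i(H_i)=i$. As $1\leq i\leq m$ (noted after the definition of $S_i$), we get $i\in J(i)$.

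I do not expect a real obstacle. The only point needing care is the degenerate empty hyperedge: $i(H_0)$ is undefined, which is why $J(i)$ ranges over $[m]$ rather than $J$. I would also state explicitly that every $H_j$ with $j\geq1$ is nonempty, which holds because the hyperedges are distinct and $H_0=\emptyset$.
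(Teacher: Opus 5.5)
Your proof is correct and follows essentially the paper's argument. The paper's observation that $H_j\cap S_i\neq\emptyset$ forces $j\geq i$ is exactly your bound $i(H_j)\leq j$, and both proofs then use $S_i\subseteq H_i$ to get $i(H_i)\geq i$ and hence $i(H_i)=i$; your explicit handling of the empty hyperedge $H_0$ and of $1\leq i\leq m$ is a small, welcome addition that the paper leaves implicit.
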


\begin{proof}
	By the definition of $i(v)$, $v\in S_i$, if $H_j\cap S_i\neq \emptyset$, then we must have $j\geq i$. Thus $H_i\cap S_{i'}=\emptyset$ must hold for all $i'\in I$, $i'>i$.
	Furthermore, by \Cref{lem-gaacyclic-Si} we have $S_i\subseteq H_i$, thus $i(H_i)=i$ also follows. 
\end{proof}

\begin{lemma}\label{lem-gaacyclic-J(i)-subseteq-Hi}
	For $i^* ~=~ \max I$ and $j\in J(i^*)$ we have $H_j\subseteq H_{i^*}$. 
\end{lemma}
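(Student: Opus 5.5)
Fix $i^*=\max I$ and $j\in J(i^*)$, so $i(H_j)=i^*$, and by \Cref{lem-gaacyclic-J(i)} we have $j\geq i^*$. If $j=i^*$ there is nothing to prove, so assume $j>i^*$. The plan is to split $H_j$ into the part lying in $S_{i^*}$ and the part lying in earlier hyperedges, and then show that both parts lie in $H_{i^*}$.

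\emph{Step 1.} Every $v\in H_j$ has $i(v)\leq i(H_j)=i^*$. So $H_j\subseteq \bigcup_{i\in I, i\leq i^*}S_i$. By \Cref{lem-gaacyclic-unions} (with $k=i^*+1$, or directly), this set equals $\bigcup_{j'\leq i^*}H_{j'}$. Since $j>i^*$, this gives $H_j\subseteq \bigcup_{j'<j}H_{j'}$. The running intersection property \eqref{e-running-intersection} then yields $H_j = H_j\cap \bigcup_{j'<j}H_{j'}\subseteq H_{\ell(j)}$ with $\ell(j)<j$.

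\emph{Step 2.} Iterate. If $\ell(j)>i^*$, then $i(H_{\ell(j)})\leq i^*$, because $i^*=\max I$ bounds every $i(v)$. Also $H_j\subseteq H_{\ell(j)}$, so $H_j$ meets $S_{i^*}$ whenever it did before. Applying Step 1 to $H_{\ell(j)}$ pushes the containment to a strictly smaller index. The descending chain $j>\ell(j)>\ell(\ell(j))>\cdots$ continues while the index exceeds $i^*$, and ends at some index $k\leq i^*$ with $H_j\subseteq H_k$.

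\emph{Step 3.} Since $i(H_j)=i^*$, there is $v\in H_j\cap S_{i^*}$. Then $v\in H_k$, so $i(v)\leq k$, which forces $k\geq i^*$. Combined with $k\leq i^*$ we get $k=i^*$, and hence $H_j\subseteq H_{i^*}$.

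The main obstacle is Step 2. The chain may jump from above $i^*$ straight to below $i^*$, and I need Step 3 to rule this out rather than assume it. The key point is that containment in $H_k$ is preserved along the chain, and any $H_k$ containing a vertex of $S_{i^*}$ has $k\geq i^*$. An alternative is a direct argument: $H_j\subseteq\bigcup_{j'<j}H_{j'}$, so $H_j\subseteq H_{\ell(j)}$, and then induction on $j$ over indices in $J(i^*)$. Here $\ell(j)\in J(i^*)$, because $H_{\ell(j)}\supseteq H_j$ contains a vertex of $S_{i^*}$ and all its vertices have $i(\cdot)\leq i^*$. The induction hypothesis applied to $\ell(j)$ finishes, with base case $j=i^*$ given by \Cref{lem-gaacyclic-J(i)}.
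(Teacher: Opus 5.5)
Your proof is correct and is essentially the paper's argument. Your ``alternative'' is exactly what the paper does: it takes a smallest counterexample $j\in J(i^*)$, uses $j>i^*$ and \Cref{lem-gaacyclic-unions} to get $H_j\subseteq H_{\ell(j)}$, and observes that $\ell(j)\in J(i^*)$. Your main route just unrolls that induction along the descending chain $j>\ell(j)>\ell(\ell(j))>\cdots$. It then closes with the same observation: any hyperedge containing a vertex of $S_{i^*}$ has index at least $i^*$.
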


\begin{proof}
	Assume for a contradiction that the claim is not true and choose a smallest counter example $j\in J(i^*)$. By \Cref{lem-gaacyclic-J(i)} we have $i^*\in J(i^*)$, and since $i^*$ is not a counterexample we must have $j>i^*$. 
	
	By the running intersection property there exists an $\ell(j)<j$ such that 
	\[
	H_j\cap \left(\bigcup_{j'\in J\atop j'<j} H_{j'}\right) ~\subseteq~ H_{\ell(j)}.
	\]
	Since $j>i^*$, by \Cref{lem-gaacyclic-unions} we have 
	\[
	\bigcup_{j'\in J\atop j'<j} H_{j'} ~=~ \bigcup_{i\in I\atop i<j} S_i ~=~ \bigcup_{i\in I} S_i ~=~ V
	\]
	and thus we have
	\begin{equation}\label{e-j-ellj}
		H_j\subseteq H_{\ell(j)}.
	\end{equation}
	This implies that $i(H_{\ell(j)})\geq i(H_j)=i^*$ from which $i(H_{\ell(j)})=i^*$ follows, since $i^*$ is the largest index in $I$. Consequently, $\ell(j)\in J(i^*)$. Since $\ell(j)<j$ it cannot be a counter example by our choice of $j$, hence $H_{\ell(j)}\subseteq H_{i^*}$. By \eqref{e-j-ellj} this contradicts the fact that $j$ is a counter example. This contradiction proves our claim. 
\end{proof}

\begin{lemma}\label{lem-gaacyclic-smallest-ell}
	For $i^* ~=~ \max I$ and $j > i^*$ we have $\ell(j)\leq i^*$.
\end{lemma}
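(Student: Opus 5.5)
We will argue directly from the choice of $\ell(j)$ as the \emph{smallest} index satisfying the running intersection property \eqref{e-running-intersection}, combined with \Cref{lem-gaacyclic-unions} and the fact that $i^*=\max I$. The key idea is that for $j>i^*$ the set $H_{i^*}$ itself is a valid witness for \eqref{e-running-intersection}, or at least that some index not exceeding $i^*$ is. Minimality of $\ell(j)$ then forces $\ell(j)\leq i^*$.

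Fix $j>i^*$. As in the proof of \Cref{lem-gaacyclic-J(i)-subseteq-Hi}, \Cref{lem-gaacyclic-unions} gives
\[
\bigcup_{j'<j}H_{j'}=\bigcup_{i\in I,\,i<j}S_i=V,
\]
so \eqref{e-running-intersection} for $j$ reduces to $H_j\subseteq H_{\ell}$. Hence $\ell(j)$ is the smallest index $\ell<j$ with $H_j\subseteq H_\ell$.

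It therefore suffices to exhibit some $\ell\leq i^*$ with $H_j\subseteq H_\ell$. We will do this by strong induction on $j>i^*$. Let $\ell_0=\ell(j)$, which satisfies $H_j\subseteq H_{\ell_0}$ and $\ell_0<j$.
\begin{itemize}
\item If $\ell_0\leq i^*$, we are done.
\item Otherwise $i^*<\ell_0<j$. The induction hypothesis applied to $\ell_0$ yields $\ell(\ell_0)\leq i^*$ with $H_{\ell_0}\subseteq H_{\ell(\ell_0)}$. Transitivity gives $H_j\subseteq H_{\ell(\ell_0)}$, and since $\ell(\ell_0)\leq i^*<j$ this index is a valid witness for $j$.
\end{itemize}
In the second case, minimality of $\ell(j)$ gives $\ell(j)\leq \ell(\ell_0)\leq i^*$, contradicting $\ell_0>i^*$. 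So only the first case occurs. The base case $j=i^*+1$ is covered automatically, because $\ell(j)<j$ already means $\ell(j)\leq i^*$.

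The main point to watch is the reduction of the running intersection condition to plain containment. This requires $\bigcup_{j'<j}H_{j'}=V$ for \emph{every} $j>i^*$, including $j=i^*+1$, and that holds because all $S_i$ with $i\leq i^*<j$ are already covered. The rest is the transitivity-plus-minimality argument. An alternative check is that any $H_j$ with $j>i^*$ has $i(H_j)\leq i^*$, but the induction above avoids needing this.
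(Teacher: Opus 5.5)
Your proof is correct and follows essentially the paper's argument. You reduce the running intersection property to plain containment for indices beyond $i^*$, chain $H_j\subseteq H_{\ell(j)}\subseteq H_{\ell(\ell(j))}$, and contradict the minimality of $\ell(j)$. The strong induction is superfluous, though: $\ell(\ell_0)<\ell_0=\ell(j)$ by itself already contradicts minimality, and that is exactly how the paper concludes in one step.
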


\begin{proof}
	Let us recall first that $\ell(j)$ is the smallest $\ell$ that satisfies the running intersection property \eqref{e-running-intersection}. Assume for a contradiction that there exists a $j>i^*$ for which $\ell(j)>i^*$. Note that in this case by \Cref{lem-gaacyclic-unions} we have 
	\[
	\bigcup_{i<j} H_i ~=~ \bigcup_{i\in I} S_i ~=~ \bigcup_{i<\ell(j)} H_i.
	\]
	Consequently, by \eqref{e-running-intersection} we have the relations
	\[
	H_j ~\subseteq~ H_{\ell(j)} ~\subseteq~ H_{\ell(\ell(j))}
	\]
	contradicting that $\ell(j)$ is the smallest index satisfying \eqref{e-running-intersection} for $j$, since $\ell(\ell(j)) ~<~ \ell(j)$.
\end{proof}

\begin{lemma}\label{lem-gaacyclic-nestset}
	The set $X=S_{i^*}$, where $i^* ~=~ \max I$,  is a $(d,2^d)$-nest set of $\cH$. 
\end{lemma}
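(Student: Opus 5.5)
We have to check two conditions from \Cref{def-nest-set}: $X=S_{i^*}$ is nonempty with $|X|\leq d$, and $|\cT(\cH,X)^u|\leq 2^d$.

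The first condition is immediate. $i^*\in I$, so some $v$ has $i(v)=i^*$, hence $S_{i^*}\neq\emptyset$. By \Cref{lem-gaacyclic-Si} we have $S_{i^*}\subseteq H_{i^*}$, so $|X|\leq d$.

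For the second condition, the plan is to show that every set in $\cT(\cH,X)$ lies inside the fixed set $H_{i^*}\setminus X$. Then $\cT(\cH,X)^u\subseteq 2^{H_{i^*}\setminus X}$, and since $|H_{i^*}|\leq d$ this gives $|\cT(\cH,X)^u|\leq 2^{d}$. So fix $H_j\in\cQ(\cH,X)$. Since $H_j\cap S_{i^*}\neq\emptyset$ and $i^*=\max I$, we get $i(H_j)=i^*$, i.e.\ $j\in J(i^*)$. (When $j=0$ the set is empty, so this case is trivial.) \Cref{lem-gaacyclic-J(i)-subseteq-Hi} then gives $H_j\subseteq H_{i^*}$, hence $H_j\setminus X\subseteq H_{i^*}\setminus X$, which is exactly the containment we need. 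In fact the bound is slightly better, $2^{|H_{i^*}\setminus X|}\leq 2^{d-1}$, though we do not need this.

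The only step that needs care is the identification $\cQ(\cH,X)\cup\cP(\cH,X)\subseteq\{H_j\mid j\in J(i^*)\}$. It relies on the definition $i(H)=\max_{v\in H}i(v)$ together with the maximality of $i^*$: any hyperedge meeting $S_{i^*}$ has maximum index exactly $i^*$. Once this is in place, the rest follows directly from \Cref{lem-gaacyclic-J(i)-subseteq-Hi}. \Cref{lem-gaacyclic-smallest-ell} is not needed here; it is presumably used afterwards to show that the projection is again $\ga$-acyclic of rank $\leq d$.
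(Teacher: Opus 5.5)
Your proof is correct and is essentially the paper's argument: $|X|\leq d$ follows from $S_{i^*}\subseteq H_{i^*}$, and \Cref{lem-gaacyclic-J(i)-subseteq-Hi} puts every set of $\cT(\cH,X)$ inside $H_{i^*}\setminus X$, so $\cT(\cH,X)^u\subseteq 2^{H_{i^*}\setminus X}$ has at most $2^d$ members. You are slightly more careful than the paper: you check that $X\neq\emptyset$, and you use only the containment of $\cQ(\cH,X)$ in $\{H_j\mid j\in J(i^*)\}$, whereas the paper asserts the equality $\cT(\cH,X)=\{H_j\setminus X\mid j\in J(i^*)\}$, which can harmlessly fail by the empty set.
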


\begin{proof}
	By \Cref{lem-gaacyclic-Si} we have $|X|\leq d$. By our indexing the sets of $\cH$ we have
	$\cT(\cH,X)=\{H_j\setminus X\mid j\in J(i^*)\}$, and by \Cref{lem-gaacyclic-J(i)-subseteq-Hi} we have $H_j\setminus X\subseteq H_{i^*}\setminus X$ for all $j\in J(i^*)$. Since $|H_{i^*}\setminus X|\leq |H_{i^*}|\leq r(\cH)\leq d$, all of the set of $\cT(\cH,X)$ are subsets of a set not larger than $d$. Hence $|\cT(\cH,X)^u|\leq 2^d$ follows. 
\end{proof}

\begin{lemma}\label{lem-gaacyclic-projection}
	The projection hypergraph $\cR(\cH,X)\cup \cT(\cH,X)^u$, where
	$X=S_{i^*}$ with $i^* ~=~ \max I$,  is $\ga$-acyclic and has rank $\leq d$.
\end{lemma}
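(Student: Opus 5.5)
The plan is to show directly that the projection hypergraph $\cG=\cR(\cH,X)\cup\cT(\cH,X)^u$ has rank at most $d$ and satisfies the running intersection property, by exhibiting an explicit ordering.

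\textbf{Rank.} By \Cref{lem-gaacyclic-nestset} and \Cref{lem-gaacyclic-J(i)-subseteq-Hi}, every member of $\cT(\cH,X)^u$ is a subset of $H_{i^*}\setminus X$. Hence it has size at most $d$. The members of $\cR(\cH,X)$ are hyperedges of $\cH$ and so have size at most $d$ as well.

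\textbf{Ordering.} First identify which hyperedges lie in $\cR(\cH,X)$. A hyperedge $H_j$ meets $X=S_{i^*}$ iff $i(H_j)=i^*$, i.e. iff $j\in J(i^*)$. So $\cR(\cH,X)=\{H_j\mid j\notin J(i^*)\}$, and all these indices satisfy $j\neq i^*$.

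The intended ordering of $\cG$ is:
\begin{enumerate}
\item $H_0=\emptyset$;
\item the sets $H_j$ with $j<i^*$, $j\notin J(i^*)$, in their original order;
\item the set $H_{i^*}\setminus X$;
\item the remaining members of $\cT(\cH,X)^u$, all subsets of $H_{i^*}\setminus X$, in any order;
\item the sets $H_j$ with $j>i^*$, $j\notin J(i^*)$, in their original order.
\end{enumerate}
Members of $\cT(\cH,X)^u$ that coincide with some set of $\cR$ are listed only once. This is harmless: a duplicate can be placed anywhere after its twin, with its twin as the witness.

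\textbf{Checking the running intersection property.}

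For sets in group (2), i.e. $j<i^*$, the preceding sets in the new order are a subfamily of $\{H_{j'}: j'<j\}$. No $j'<i^*$ lies in $J(i^*)$, by \Cref{lem-gaacyclic-J(i)}. So the original witness $H_{\ell(j)}$ with $\ell(j)<j<i^*$ is itself in $\cR$ and precedes $H_j$, and the intersection condition only becomes weaker.

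For $H_{i^*}\setminus X$, the union of the preceding sets is $\bigcup_{j<i^*}H_j$ with $X$ removed, since $X\cap H_j=\emptyset$ for $j<i^*$. The original witness $H_{\ell(i^*)}$ has $\ell(i^*)<i^*$ and is in $\cR$, and $(H_{i^*}\setminus X)\cap\bigcup_{j<i^*}H_j\subseteq H_{\ell(i^*)}$.

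Sets in group (4) are contained in $H_{i^*}\setminus X$, which serves as their witness.

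For sets in group (5), $j>i^*$ with $j\notin J(i^*)$, so $H_j\cap X=\emptyset$. By \Cref{lem-gaacyclic-unions}, all earlier hyperedges already cover $V$, so the running intersection property gives $H_j\subseteq H_{\ell(j)}$. By \Cref{lem-gaacyclic-smallest-ell}, $\ell(j)\le i^*$. Two cases remain:
\begin{itemize}
\item If $\ell(j)\notin J(i^*)$, then $H_{\ell(j)}\in\cR$ and precedes $H_j$ in the new order.
\item If $\ell(j)\in J(i^*)$ (necessarily $\ell(j)=i^*$, since $\ell(j)\le i^*$ and $J(i^*)$ contains no index below $i^*$), then $H_j\subseteq H_{\ell(j)}\setminus X\subseteq H_{i^*}\setminus X$, which precedes $H_j$.
\end{itemize}

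\textbf{Main obstacle.} The hardest step is group (5), specifically ensuring the witness is not a deleted set. This is exactly where \Cref{lem-gaacyclic-smallest-ell} and the covering property from \Cref{lem-gaacyclic-unions} are needed, and the boundary case $\ell(j)=i^*$ must be handled as above. A secondary check is that the intersections with preceding sets, computed after removing $X$, remain inside the witness, which holds because removing $X$ only shrinks sets.
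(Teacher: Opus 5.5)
Your proof is correct and follows the paper's route. You keep $H_j$ for $j<i^*$, put $H_{i^*}\setminus X$ at position $i^*$, list the remaining sets of $\cR(\cH,X)\cup\cT(\cH,X)^u$ afterwards, and verify the running intersection property using \Cref{lem-gaacyclic-unions}, \Cref{lem-gaacyclic-J(i)-subseteq-Hi} and \Cref{lem-gaacyclic-smallest-ell}. Your version is slightly more careful than the paper's in two places: you use $H_{i^*}\setminus X$ itself as the witness for the members of $\cT(\cH,X)^u$ (the paper points to $H_{\ell(i^*)}$, which is valid because $H_{i^*}\setminus X\subseteq\bigcup_{i<i^*}H_i$, a fact it leaves implicit), and you treat the boundary case $\ell(j)=i^*$ explicitly, which the paper passes over.
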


\begin{proof}
	By \Cref{lem-gaacyclic-J(i)-subseteq-Hi} we have $\cT(\cH,X)\subseteq 2^{H_{i^*}\setminus X}$ and thus $\cT(\cH,X)^u\subseteq 2^{H_{i^*}\setminus X}$. Consequently, since $\cR(\cH,X)\subseteq \cH$ by its definition, the rank of the projection hypergraph is limited by the rank of $\cH$. 
	
	Let us now define $F_j=H_j$ for $j=0,1,\dots,i^*-1$, $F_{i^*}=H_{i^*}\setminus X$, and assume that the rest of $\cR(\cH,X)\cup \cT(\cH,X)^u$ is $\{F_{i^*+1},\dots, F_q\}$ for some integer $q$.  
	
	We claim that $\cF$ satisfies the running intersection property. This is clearly true for indices $j<i^*$, since we did not change those sets. For $j=i^*$ we have 
	\[
	F_{i^*}\cap \left(\bigcup_{i<i^*}F_i \right) ~\subseteq~ H_{i^*}\cap \left(\bigcup_{i<i^*}H_i \right) ~\subseteq~ H_{\ell(i^*)}=F_{\ell(i^*)}
	\]
	since $\ell(i^*)<i^*$ and we have $F_i=H_i$ for $i<i^*$. Finally, for $j>i^*$ we consider two cases.
	
	If $F_j=H_{j'}\setminus X$ for some $j'>i^*$, $j'\in J(i)$, then 
	we can write by \Cref{lem-gaacyclic-J(i)-subseteq-Hi} that
	\[
	F_j\cap \left(\bigcup_{i<j}F_i \right) ~\subseteq~ H_{i^*}\cap \left(\bigcup_{i< i^*}H_i \right) ~\subseteq~ H_{\ell(i^*)}=F_{\ell(i^*)}
	\]
	since we have $F_j=H_{j'}\subseteq H_{i^*}\setminus X$, and $\ell(i^*)<i^*$, implying the last equality. Thus, defining $\ell(j)=\ell(i^*)$ for such indices will work.
	
	If $F_j=H_{j'}$ for some $j'>i^*$, $j'\not\in J(i)$, then $F_j\cap X=\emptyset$ and $\ell(j)=\ell(j')$  will ensure \eqref{e-running-intersection}.
	
	Finally, if $F_j\in \cT(\cH,X)^u\setminus \cT(\cH,X)$, then by \Cref{lem-gaacyclic-J(i)-subseteq-Hi} we have $F_j\subseteq H_{i^*}\setminus X$, and $\ell(j)=\ell(i^*)$ will ensure \eqref{e-running-intersection}.
\end{proof}

\begin{corollary}\label{cor-gaacyclic}
	An $\ga$-acyclic hypergraph $\cH\subseteq 2^V$ of rank $r(\cH)\leq d$ is locally $(d,2^d)$-bounded. 
\end{corollary}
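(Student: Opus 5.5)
The plan is to verify the two clauses of \Cref{def-locally-bounded} by induction on $|V|$, the number of vertices. The class $\mathfrak A_d$ of $\ga$-acyclic hypergraphs of rank at most $d$ will serve as an invariant family. Since local boundedness is defined recursively, the induction should show that every member of $\mathfrak A_d$ satisfies (i) and (ii). The base case consists of hypergraphs on at most $\tau=d$ vertices (or with $V$ empty), where the procedure stops. For clause (i), \Cref{lem-gaacyclic-nestset} applies directly: after fixing a running-intersection ordering with $H_0=\emptyset$ and discarding unused vertices, the set $X=S_{i^*}$ with $i^*=\max I$ is a $(d,2^d)$-nest set.

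Clause (ii) as literally stated demands more than the lemmas give. It requires that for \emph{each} $(d,2^d)$-nest $X$ the projection $\cT(\cH,X)^u\cup\cR(\cH,X)$ is again locally bounded. \Cref{lem-gaacyclic-projection} only treats the specific choice $X=S_{i^*}$, for which it shows that the projection lies in $\mathfrak A_d$; the induction hypothesis then finishes that case. For an arbitrary nest $X$ (with $|X|\le d$ and $|\cT(\cH,X)^u|\le 2^d$), I would first try to prove directly that the projection is still $\ga$-acyclic of rank $\le d$. The rank bound is not automatic: a union in $\cT^u$ can exceed $d$ elements unless the sets of $\cT$ lie in a common $d$-set.

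So the main obstacle is showing, for a general nest $X$, that $\cT(\cH,X)^u$ neither creates $\ga$-cycles nor raises the rank. I would attempt this via the characterization of $\ga$-acyclicity by GYO reduction: removing $X$ and then adding unions of traces of edges through $X$ should preserve reducibility when $\cT$ is small. If this fails, I would fall back on the reading that the paper evidently intends, consistent with how \Cref{cor-locally-bounded} is used. Under that reading, it suffices that at every stage a $(d,2^d)$-nest exists whose projection stays in $\mathfrak A_d$, and Step 1 of \textsc{Procedure SBVE} chooses $X=S_{i^*}$ recomputed for the current hypergraph. That version follows immediately by iterating \Cref{lem-gaacyclic-nestset} and \Cref{lem-gaacyclic-projection}. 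One detail needs recording here: a running-intersection ordering of the projected hypergraph is available, namely the ordering $F_0,\dots,F_q$ built in the proof of \Cref{lem-gaacyclic-projection}. With this ordering the lemmas apply again after relabeling, and the induction on $|V|$ closes.
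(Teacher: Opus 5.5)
Your fallback is exactly the paper's argument. The paper's proof takes the nest $X=S_{i^*}$ from \Cref{lem-gaacyclic-nestset}, uses \Cref{lem-gaacyclic-projection} to keep the projection $\ga$-acyclic of rank at most $d$ (with $F_0,\dots,F_q$ as the new running-intersection ordering), and iterates. It never addresses the ``for each nest'' clause of \Cref{def-locally-bounded}, so it proves only the existential reading you settle on. That reading is also all \Cref{t-gaacyclic} needs, since Step 1 always picks $S_{i^*}$.

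You should drop your first plan, which was to show that the projection stays $\ga$-acyclic (via GYO) for an arbitrary $(d,2^d)$-nest. That step fails, and in fact the literal ``for each nest'' version of the corollary is false.

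\textbf{The first plan fails.} Take $d=2$ and the path $u\!-\!a\!-\!w\!-\!b\!-\!v$ as a rank-$2$ hypergraph. Then $X=\{u,v\}$ is a $(2,4)$-nest, because $\cT(\cH,X)=\{\{a\},\{b\}\}$ and $|\cT(\cH,X)^u|=4$. Yet the projection contains the triangle $\{a,w\},\{w,b\},\{a,b\}$, so it is not $\ga$-acyclic.

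\textbf{The literal statement is false.} Iterate the same trick. Start from the path $1\!-\!2\!-\!3\!-\!4\!-\!5$. For each of the six missing pairs $\{a,b\}$, attach a pendant leaf $u_{ab}$ at $a$ and a pendant leaf $v_{ab}$ at $b$. The result is a tree, hence $\ga$-acyclic of rank $2$. Each pair $\{u_{ab},v_{ab}\}$ remains a $(2,4)$-nest throughout. Eliminating all six pairs leaves $K_5$ plus singletons and $\emptyset$. In $K_5$, every single vertex gives $|\cT^u|=16$ and every pair gives $|\cT^u|=8$, so there is no $(2,4)$-nest on five vertices and clause (i) fails. Thus the existential reading is not merely the one the paper intends; it is the only one under which the corollary holds.
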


\begin{proof}
	By \Cref{lem-gaacyclic-nestset} $\cH$ has a $(d,2^d)$-nest set, and by \Cref{lem-gaacyclic-projection}, its projection, after the elimination of that nest set, is again $\ga$-acyclic. Thus, \Cref{def-locally-bounded} works with $\tau=d$ and $K=2^d$. 
\end{proof}

\begin{theorem}\label{t-gaacyclic}
	A BPO problem with an $\ga$-acyclic hypergraph $\cH\subseteq 2^V$ of rank $r(\cH)=O(\log poly(|V|,|\cH|))$ can be solved by \textsc{Procedure SBVE} in strongly polynomial time.
\end{theorem}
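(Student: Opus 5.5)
The plan is to combine \Cref{cor-gaacyclic} with the running-time bound of \Cref{t-main-X-reduction} and the correctness statement \Cref{cor-SBVE}. Set $d=r(\cH)$; by assumption $d\le c\log p(|V|,|\cH|)$ for some constant $c$ and polynomial $p$, so $2^d\le p(|V|,|\cH|)^c$ is polynomial in the input size. Run \textsc{Procedure SBVE} with $\tau=d$. In Step 1 of iteration $k$, choose $X_k=S_{i^*}$, where $i^*=\max I$ is computed from a running-intersection ordering of the current hypergraph $\cH(g_k)$. Such an ordering is obtainable in $O(\sum_{H}|H|)$ time by the cited linear-time algorithms. Computing $i(v)$ and $I$ from the ordering is then immediate, so we never need the (potentially expensive) nest-set search of \Cref{lem-nest-set-computation}.

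The key invariant is that each $\cH(g_k)$ is $\ga$-acyclic of rank at most $d$. For $k=1$ this holds by hypothesis. For the step from $k$ to $k+1$, \eqref{e-th4-consequence} gives
\[
\cH(g_{k+1})\subseteq \cR(\cH(g_k),X_k)\cup \cT(\cH(g_k),X_k)^u,
\]
and \Cref{lem-gaacyclic-projection} says the right-hand side is $\ga$-acyclic of rank $\le d$. The rank bound passes to subfamilies automatically. $\ga$-acyclicity, however, is \emph{not} hereditary in general, so this is the main obstacle. I would handle it in one of two ways.
\begin{itemize}
\item Observe that the running intersection property in the proof of \Cref{lem-gaacyclic-projection} survives deleting hyperedges whose role is only as subsets of $F_{i^*}=H_{i^*}\setminus X$. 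One can simply keep $F_{i^*}$ (with coefficient zero) in the working hypergraph. Deleting a hyperedge $F_j$ that is contained in another hyperedge preserves $\ga$-acyclicity: redirect any $\ell(j')=j$ to $\ell(j)$, or to the containing set.
\item Alternatively, and more simply, carry the full projection hypergraph $\cR\cup\cT^u$ through the iterations, allowing zero coefficients. This is harmless because \Cref{t-main-X-reduction} works for any $\bc$, including zero entries.
\end{itemize}
With either choice, \Cref{lem-gaacyclic-nestset} applies at every iteration, so $|X_k|\le d$ and $|\YY_{\cT_k}|=|\cT_k^u|\le 2^d$ by \Cref{l-union-signature}.

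For the complexity, each iteration removes at least one vertex, so there are at most $|V|$ iterations. The working hypergraph stays polynomially bounded: each step adds at most $2^d$ new sets, so $|\cH(g_k)|\le|\cH|+|V|2^d$. By \Cref{t-main-X-reduction}, iteration $k$ costs $O(|\cH(g_k)|\,2^d\max\{|V|,2^d d,2^d\})$. This is polynomial since $2^d$ is polynomial; in particular the $2^{|X|}$ factor is $2^d$. The final brute-force maximization over $|W_k|<\tau=d$ variables costs $2^d$ times polynomial. Strong polynomiality follows because the arithmetic consists only of additions and comparisons on integers. By \Cref{cor-SBVE} (equivalently, the $Z$-bound of \Cref{t-main-X-reduction}) their bit lengths stay polynomial, since the number of iterations is at most $|V|$ and each step multiplies magnitudes by at most a polynomial factor. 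Correctness of the returned optimum value and solution is exactly \Cref{cor-SBVE}.
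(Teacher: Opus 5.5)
Your proposal is correct and follows the paper's proof: you choose $X_k=S_{i^*}$ in Step 1 via \Cref{lem-gaacyclic-nestset}, recurse on the full projection hypergraphs $\cR\cup\cT^u$ with zero coefficients allowed (your second bullet, which is exactly the paper's remark) so that \Cref{lem-gaacyclic-projection} preserves $\ga$-acyclicity and rank $\le d$, and bound each iteration by \Cref{t-main-X-reduction} using $|\YY_{\cT_k}|\le 2^d$, while also supplying bookkeeping the paper leaves implicit (working-hypergraph size, final brute-force step). One small correction: the $Z$-bound of \Cref{t-main-X-reduction} multiplies magnitudes per step by $|\YY_\cT|\,|V|^{|V|+1}$, not by a polynomial factor, but its logarithm is polynomial and there are at most $|V|$ iterations, so your bit-length conclusion still holds.
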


\begin{proof}
	Let us note first that given a hypergraph $\cH\subseteq 2^V$, we can recognize, as noted above, in polynomial time if it is $\ga$-acyclic, and if it is, a labeling of its hyperedges satisfying the running intersection property can also be constructed at the same time, see \cite{graham1979universal-1979,yu1979algorithm-1979,Beeri-Fagin-Maier-Yannakakis-1983,maier1983theory,Fagin-1983b}. Thus the claim follows by \Cref{cor-gaacyclic} and \Cref{t-main-X-reduction}. Note that our inductive argument here do not recourse by the projections of the BPO objective functions, but rather on the projection hypergraphs. In other words, as we progress, we consider BPO problems in which some terms may have zero coefficients.  Note that the projection hypergraphs are also $\ga$-acyclic by Lemma \ref{lem-gaacyclic-projection}, and we choose in Step 1 of \textsc{Procedure SBVE} the set $S_{i^*}$ as a nest set, according to Lemma \ref{lem-gaacyclic-nestset}.
	Note also that since $r(\cH)$ is not supposed to be a constant, we could not rely on Corollary \ref{cor-locally-bounded} to find an appropriate nest set in each iteration. \end{proof}

\subsection{Circular interval hypergraphs}

For a finite set $V$ let us denote by $C_V=(V,E)$ a directed graph on vertices $V$ that is a directed (Hamiltonian) cycle going through all vertices in $V$. Given such a cycle $C_V=(V,E)$ and a vertex $v\in V$ we denote by $v^-$ and $v^+$ the (unique) vertices for which we have $(v^-,v),~ (v,v^+) ~\in E$. We say that a subset $I\subseteq V$ is a \emph{circular interval} in $C_V$ if the subgraph of $C_V$ induced by $I$ is a directed path or $I=V$. We call a hypergraph $\cH\subseteq 2^V$ \emph{circular interval} with respect to $C_V$ if all hyperedges $H\in \cH$ are circular intervals in $C_V$. 

Note that a circular interval hypergraph may not be union bounded (e.g., if they contain many pairwise disjoint small intervals), may not have bounded tree-width (e.g., if they have long circular intervals), and may not be $\gb$-acyclic (e.g., if they have a chain of circularly overlapping circular interval hyperedges that cover $V$). 

Circular interval hypergraphs were considered in the literature under different names (circular-arc hypergraphs, circular consecutive ones property, etc.) see e.g., \cite{Kobler-Kuhnert-Verbitsky-2017,Quilliot-1984}. It was observed in \cite{Quilliot-1984} that circular interval hypergraphs can be recognized (and the corresponding directed cycle $C_V$ can be constructed) in polynomial time, using the results of \cite{Booth-Lueker-1976} together with a transformation introduced by \cite{Tucker-1972}. 

It was also observed that some hard optimization problems become tractable under the circular consecutive ones property, see e.g., \cite{Hochbaum-Levin-2006,Bartholdi-Orlin-Ratliff-1980}. In this subsection we would like to add problem BPO to this list. 

Let us first make a few easy, technical observations about the structure of circular interval hypergraphs. 

\begin{lemma}\label{l-circ-int-1}
	Assume $\cH\subseteq 2^V$ is a circular interval hypergraph with respect to the directed (Hamiltonian) cycle $C_V$. Then we have $|\cH|\leq |V|^2$. 
\end{lemma}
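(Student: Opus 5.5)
We bound $|\cH|$ by counting the possible circular intervals of $C_V$. Since $\cH\subseteq 2^V$ is a set of hyperedges and every $H\in\cH$ is a circular interval, we have $|\cH|$ at most the number of distinct circular intervals of $C_V$. So the plan is to describe every circular interval by a small parameter pair and count the pairs.

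Let $n=|V|$. The empty set is a circular interval only if we regard the empty subgraph as a (degenerate) directed path; we handle it separately and put $V$ aside as well. Every other circular interval $I$, with $\emptyset\neq I\neq V$, induces a directed path in $C_V$. This path has a unique first vertex $s\in I$, namely the vertex with $s^-\notin I$. It also has a length $k=|I|\in\{1,\dots,n-1\}$. Conversely, $I=\{s,s^+,s^{++},\dots\}$, with $k$ consecutive vertices along $C_V$, so $I$ is determined by the pair $(s,k)$. The number of such pairs is $n(n-1)$.

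Adding back $V$ and possibly $\emptyset$ gives at most $n(n-1)+2$ distinct circular intervals. This is at most $n^2$ whenever $n\geq 2$. The only step needing care is this small-$n$ bookkeeping. For $n=1$ the sets are $\emptyset$ and $V$, so $|\cH|\le 2$. To get $|\cH|\le 1=|V|^2$ here, we adopt the convention that $\emptyset$ is not a circular interval unless the paper intends otherwise. Alternatively, we observe that when $n\ge 2$ the count $n(n-1)+2\le n^2$ already absorbs $\emptyset$. In all cases of interest ($n\geq 2$), the map $I\mapsto (s,k)$ is injective on proper nonempty intervals, and this gives $|\cH|\le |V|^2$.
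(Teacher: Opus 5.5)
Your proof is correct and is essentially the paper's argument. The paper identifies each hyperedge by its first and last element along $C_V$, while you use the first element and the length; this is the same $|V|^2$ pair count, and your separate handling of $V$, of $\emptyset$, and of the case $|V|=1$ adds care that the paper's one-line proof glosses over.
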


\begin{proof}
	Each hyperedge $H\in \cH$ is defined uniquely by its first and last element in the circular order of $C_V$. Thus the claim follows.  
\end{proof}

\begin{lemma}\label{l-circ-int-2}
	Assume $\cH\subseteq 2^V$ is a circular interval hypergraph with respect to the directed (Hamiltonian) cycle $C_V$, $v\in V$, and $\cF\subseteq \cH$ such that $F\cap\{v,v^+\}\neq \emptyset$ for all $F\in\cF$. Then $\cF^u$ is also circular interval with respect to $C_V$.
\end{lemma}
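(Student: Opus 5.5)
The plan is to show that every union $U=\bigcup_{F\in\cF'}F$ with $\cF'\subseteq\cF$ is a circular interval in $C_V$. The case $\cF'=\emptyset$ gives $U=\emptyset$, which we treat by the same convention as for $\emptyset\in\cH$: either the empty path, or adjoined harmlessly as in the earlier sections. The idea is to reduce to a family of circular intervals that all contain the common arc $(v,v^+)$, where the union is easy to describe.

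\textbf{Step 1.} For each $F\in\cF'$ we show that $\hat F=F\cup\{v,v^+\}$ is again a circular interval. If $F=V$ this is trivial. Otherwise $F$ induces a directed path and contains $v$ or $v^+$. Adding the cycle-neighbour $v^+$ of $v$, or the neighbour $v$ of $v^+$, either extends this path at one of its ends or is already present. If it closes the path into the whole cycle, the result is $V$, which is also a circular interval.

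\textbf{Step 2.} We treat the union of the $\hat F$. Each $\hat F\neq V$ is a directed path $a_F\to\dots\to v\to v^+\to\dots\to b_F$. It is therefore determined by the number $s_F$ of steps from $a_F$ backwards to $v$ and the number $t_F$ of steps from $v^+$ forward to $b_F$. The union $\hat U=\bigcup_{F\in\cF'}\hat F$ is the arc determined by $\max_F s_F$ and $\max_F t_F$. It equals $V$ when these maxima together cover the whole cycle, or when some $\hat F=V$. In either case $\hat U$ is a circular interval.

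\textbf{Step 3.} We compare $U$ with $\hat U$. Since $\hat U=U\cup\{v,v^+\}$, the two differ only if $v\notin U$ or $v^+\notin U$.

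If both $v,v^+\in U$, then $U=\hat U$ and we are done. They cannot both be missing, since every $F$ meets $\{v,v^+\}$ and $\cF'\neq\emptyset$.

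If $v\notin U$, then every $F\in\cF'$ contains $v^+$ but not $v$. Being a circular interval, each such $F$ is a directed path starting at $v^+$. Such paths are nested, so $U$ is the longest of them, which is a circular interval. The case $v^+\notin U$ is symmetric: every $F$ is a path ending at $v$, and $U$ is again the longest one.

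The main obstacle, though minor, is the wrap-around bookkeeping in Step 2. One must make sure that a union of arcs through the common edge $(v,v^+)$ is either an arc or all of $V$, and never a set whose induced subgraph is a path with a gap. Parametrizing each arc by the pair $(s_F,t_F)$ of its extents on either side of the edge $(v,v^+)$ handles this cleanly.
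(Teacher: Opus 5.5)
Your proof is correct, but it takes a different route from the paper's.

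\textbf{The paper's route.} It argues pairwise. Two circular intervals that meet $\{v,v^+\}$ either intersect, in which case their union is a circular interval, or one contains only $v$ and the other only $v^+$. In the second case they abut across the edge $(v,v^+)$, and their union is again a circular interval. The lemma then follows, implicitly, by induction on $|\cF'|$, since every partial union is again a circular interval meeting $\{v,v^+\}$.

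\textbf{Your route.} You give a single global argument instead. Padding each $F$ to $F\cup\{v,v^+\}$ makes all the arcs share the edge $(v,v^+)$. Their union is then determined by the maximal backward and forward extents $(\max_F s_F,\max_F t_F)$. You undo the padding by noting two things. If $v,v^+\in U$, the padding changes nothing. If one of them is missing from $U$, the members form a nested chain of arcs starting at $v^+$ (respectively ending at $v$).

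\textbf{Comparison.} The paper's version is shorter, but it leaves two things unproved. One is the wrap-around fact that two overlapping circular intervals have a circular-interval union; this needs, for instance, that if the union misses some $w$, both sets are intervals of the path $C_V-w$. The other is the induction step. Your version avoids both by handling the wrap-around explicitly through the parametrization $(s_F,t_F)$. The cost is the extra case split in Step~3, and the gain is an explicit description of every member of $\cF^u$. You also deal with the empty union, which the paper passes over.
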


\begin{proof}
	Note first that if $I,J\in \cF$, $I\cap J\neq \emptyset$, then $I\cup J$ is a circular interval set (with respect to $C_V$). Note next that if $I,J\in \cF$, $I\cap \{v,v^+\}=\{v\}$, and $J\cap \{v,v^+\}=\{v^+\}$ then still the set $I\cup J$ is a circular interval (with respect to $C_V$). Thus the claim follows.
\end{proof}

Given a directed (Hamiltonian) cycle $C_V=(V,E)$ and vertex $v\in V$ we define its projection on $V\setminus\{v\}$ by $C^{-v}_V=(V\setminus\{v\}, (E\setminus \{(v^-,v),(v,v^+)\}) \cup \{(v^-,v^+)\})$. Note that $C^{-v}_V$ is again a directed (Hamiltonian) cycle on the set $V\setminus \{v\}$, in which elements $v^-$ and $v^+$ are consecutive.

\begin{theorem}\label{t-circularly-interval}
	A circular interval hypergraph $\cH\subseteq 2^V$ is locally $(1,|V|^2)$-bounded and any vertex is a $(1,|V|^2)$-nest. Consequently, the BPO problem for circular interval hypergraphs is solved in strongly polynomial time by \textsc{Procedure SBVE} using $\tau=1$.
\end{theorem}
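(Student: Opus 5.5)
Fix an arbitrary vertex $v\in V$ and take $X=\{v\}$. By \eqref{e-PQRT-updated}, $\cQ(\cH,X)$ consists of the circular intervals $H\in\cH$ with $v\in H$ and $H\neq\{v\}$. Every such $H$ contains $v^-$ or $v^+$, and $H\setminus\{v\}$ is a circular interval with respect to $C^{-v}_V$; this is the only place where we use that $v^-,v^+$ are consecutive in $C^{-v}_V$. Hence $\cT(\cH,X)$ is a circular interval hypergraph with respect to $C^{-v}_V$. Moreover, every member of $\cT(\cH,X)$ meets $\{v^-,v^+\}$, which are consecutive on $C^{-v}_V$, so we may apply \Cref{l-circ-int-2} on the cycle $C^{-v}_V$ with the vertex $v^-$ (whose successor there is $v^+$). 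It gives that $\cT(\cH,X)^u$ is again circular interval with respect to $C^{-v}_V$. By \Cref{l-circ-int-1} we then get $|\cT(\cH,X)^u|\leq |V\setminus\{v\}|^2\leq |V|^2$, so $\{v\}$ is a $(1,|V|^2)$-nest.

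Next we show that the projection hypergraph $\cR(\cH,X)\cup\cT(\cH,X)^u$ is circular interval with respect to $C^{-v}_V$. The members of $\cT(\cH,X)^u$ were handled above. A member $H\in\cR(\cH,X)$ is a circular interval of $C_V$ avoiding $v$, so it is a directed path of $C_V$ not passing through $v$. Such a path is still a directed path in $C^{-v}_V$, because the only edges removed are those incident to $v$. The one degenerate case is $H=V\setminus\{v\}$: this is a path in $C_V$, and it becomes all of $V\setminus\{v\}$, which counts as a circular interval by definition. Thus the projection is a circular interval hypergraph on the smaller ground set $V\setminus\{v\}$.

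Now argue by induction on $|V|$. Since every vertex of a circular interval hypergraph is a $(1,|V|^2)$-nest, condition (i) of \Cref{def-locally-bounded} holds, and by the previous paragraph each projection is again circular interval, so (ii) holds inductively. The bound $|V'|^2\le |V|^2$ persists for the smaller ground sets $V'$, so local $(1,|V|^2)$-boundedness follows. There is a small care point when the ground set shrinks to $1$ or $2$ vertices: the cycle notion degenerates there, but every hypergraph on such a set is trivially circular interval, so this is harmless.

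For the algorithm, run \textsc{Procedure SBVE} with $\tau=1$, choosing an arbitrary vertex $X_k=\{v\}$ in Step 1 and updating the cycle to $C^{-v}$ each time. Since $\cH(g_k)\subseteq\cR\cup\cT^u$ by \eqref{e-th4-consequence}, every $|\YY_{\cT_k}|\le |V|^2$, so \Cref{t-main-X-reduction} bounds each iteration by $O(|\cH||V|^4)$ (and $|\cH|\le|V|^2$ along the way by \Cref{l-circ-int-1}), and \Cref{cor-SBVE} gives correctness and polynomially bounded coefficients. The initial cycle $C_V$ is computed in polynomial time by the cited recognition result of \cite{Quilliot-1984}. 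The main obstacle is the first step, where $v\in H$ must be removed without breaking the circular-interval structure; the device is to work on $C^{-v}_V$, where $v^-$ and $v^+$ become adjacent, which is exactly what allows \Cref{l-circ-int-2} to apply.
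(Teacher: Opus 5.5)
Your proof is correct and follows essentially the same route as the paper: take an arbitrary $X=\{v\}$, show that $\cT(\cH,X)$ is circular interval with respect to $C^{-v}_V$ and that each of its members meets the now-consecutive pair $v^-,v^+$, then apply Lemma~\ref{l-circ-int-2} followed by Lemma~\ref{l-circ-int-1}. You add detail the paper's terse proof leaves implicit: that members of $\cR(\cH,X)$ (including the degenerate case $V\setminus\{v\}$) stay circular intervals on $C^{-v}_V$, the induction behind condition (ii) of local boundedness, and the observation that $\cH(g_{k+1})$ is a subhypergraph of the projection hypergraph, so the bound carries through SBVE.
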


\begin{proof}
	In fact we can run \textsc{Procedure SBVE} and eliminate variables $x_v$, $v\in V$ one by one, in an arbitrary order in this case. To see this assume that $\cH$ is a circular interval hypergraph with respect to the directed (Hamiltonian) cycle $C_V=(V,E)$, $v\in V$ is an arbitrary element, and $X=\{v\}$. Then the hyperedges in $\cQ(\cH,X)$ are all circular intervals of $C_V$ containing element $v$. Consequently, $\cT(\cH,X)$ is circular interval with respect to the projection $C^{-v}_V$, too. Furthermore, we have $T\cap \{v^-,v^+\}\neq \emptyset$ for all $T\in \cT(\cH,X)$. This implies by \Cref{l-circ-int-2} that the family $\cT(\cH,X)^u$ is also circular interval with respect to $C^{-v}_V$, since elements $v^-$ and $v^+$ are consecutive in $C^{-v}_V$. Thus, the claim follows by \Cref{l-circ-int-1}.
\end{proof}

Note that by applying the complexity bound of \Cref{t-main-X-reduction} we obtain an $O(|V|^5|\cH|)$ complexity for the BPO problem with circular interval hypergraph structure. 

Note also that we assume that the directed (Hamiltonian) cycle $C_V$ is also given, proving that $\cH$ is circular interval. This is similar to the case of hypergraphs with bounded reach, where a particular order of $V$ is also given and $\gr(\cH)$ is computed with respect that order. Finding an order of $V$ that minimizes the reach of the given hypergraph $\cH\subseteq 2^V$ is however an NP-hard optimization problem. 
As we mentioned earlier, we do not face such difficulties with the circular interval property. Given a hypergraph $\cH\subseteq 2^V$ we can recognize if it is circular interval and construct a corresponding directed (Hamiltonian) cycle $C_V=(V,E)$ in polynomial time in $|V|$ and $|\cH|$, see \cite{Booth-Lueker-1976,Tucker-1972}.

\bibliographystyle{plain}

\bibliography{PolynomialBinary}

\end{document}